\title{Combinatorics of Correlated Equilibria}
\author{Marie-Charlotte Brandenburg}
\address{}
\email{} 
\author{Benjamin Hollering}
\address{}
\email{} 
\author{Irem Portakal}
\address{}
\email{} 
\date{}
\keywords{correlated equilibrium, combinatorial type, convex polytope, oriented matroid strata, semialgebraic set}
\subjclass{14P10, 52B05, 52B40, 52C40, 91A05, 91A10.}
\numberwithin{equation}{section}
\theoremstyle{plain}
\newtheorem{theorem}{Theorem}[section]
\newtheorem{conjecture}{Conjecture}[section]
\newtheorem{proposition}[theorem]{Proposition}
\newenvironment{manualtheorem}[1]{%
  \manualtheoreminner
}{\endmanualtheoreminner}
\theoremstyle{definition}
\newtheorem{remark}[theorem]{Remark}
\newtheorem{example}[theorem]{Example}
\newtheorem{definition}[theorem]{Definition}
\newtheorem{construction}[theorem]{Construction}
\newcommand{\R}{\mathbb R}
\newcommand{\N}{\mathbb N}
\newcommand{\ma}{\begin{pmatrix}}
\newcommand{\trix}{\end{pmatrix}}
\newcommand{\sma}{\left(\begin{smallmatrix}}
\newcommand{\strix}{\end{smallmatrix}\right)}
\newcommand{\CES}{\mathcal{S}}
\Crefname{page}{page}{page}
\begin{document}
\begin{abstract}
We study the correlated equilibrium polytope $P_G$ of a game $G$ from a combinatorial point of view. We introduce the region of full-dimensionality for this class of polytopes and prove that it is a semialgebraic set for any game. Using a stratification via oriented matroids, we propose a structured method for describing the possible combinatorial types of $P_G$, and show that for $(2 \times n)$-games, the algebraic boundary of the stratification is a union of coordinate hyperplanes and binomial hypersurfaces. Finally, we provide a computational proof that there exists a unique combinatorial type of maximal dimension for generic $(2 \times 3)$-games.
\end{abstract}

\maketitle

\section{Introduction}

In 1950, Nash published a very influential two-page article \cite{nash50} proving the existence of a Nash equilibrium for any finite game.  This opened many new fronts, not only in game theory, but also in areas such as economics, computer science, evolutionary biology, quantum mechanics, and the social sciences. To study Nash equilibria one assumes that the actions of the players are independent and completely separated from any exterior influence. Moreover, these can be described as a system of multilinear equations \cite[Section 6]{Sturmfels02solvingsystems}. However, there exist cases where a Nash equilibrium fails to predict the most beneficial outcome (e.g.\ Pareto optimality) for all players. There are several approaches, rooted in the concept of a {\em dependency equilibrium,} which generalize Nash equilibria by imposing dependencies between the actions of players.
This class of equilibria has been studied from the point of view of algebraic statistics and computational algebraic geometry \cite{spohn2003,PS22,PSA24,PSA23}. 
On the other hand, Aumann introduced the concept of a {\em correlated equilibrium}, which assumes that there is an external correlation device such as a mediator or some other physical source. The resulting correlated equilibria are probability distributions of recommended joint strategies \cite{aumann1,aumann2}. In contrast to Nash equilibria and dependency equilibria, correlated equilibria are significantly less computationally expensive, since they only require solving a linear program \cite{Pap05}. 
In other words, the set of such equilibria can be described by linear inequalities in the probability simplex and thus form a convex polytope called the \emph{correlated equilibrium polytope}. In this article, we study combinatorial properties of correlated equilibrium polytopes with methods from convex geometry and real algebraic geometry. 

\begin{figure}[t]
 \begin{minipage}{6in}
  \centering
  \raisebox{-0.5\height}{\includegraphics[height=1.3in]{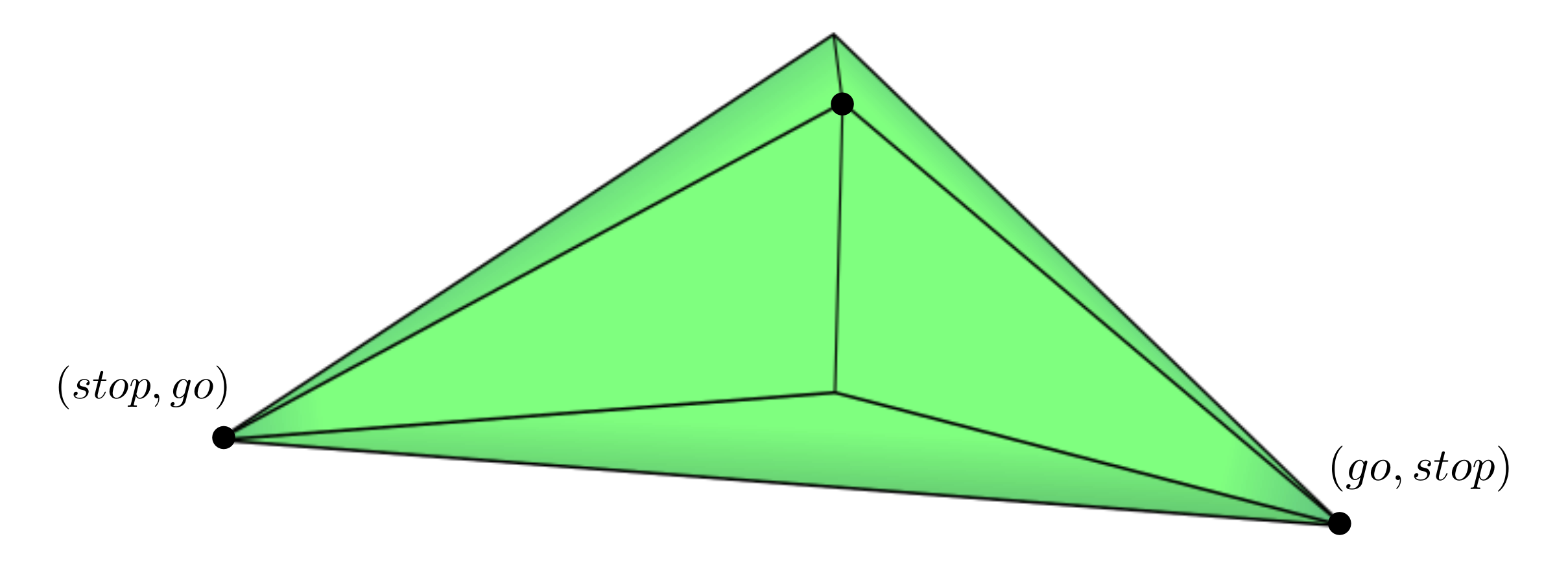}}
  \hspace*{.1in}
  \raisebox{-0.5\height}{\includegraphics[height=1.8in]{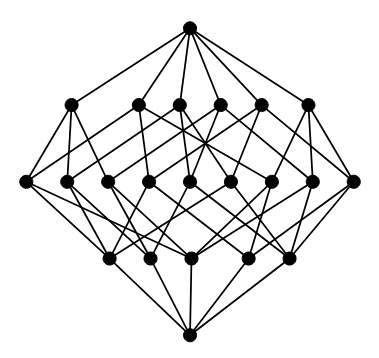}}
\end{minipage}
    \caption{The correlated equilibrium polytope of the Traffic Lights example is a bipyramid where three of its vertices are Nash equilibria. Its $f$-vector is
    $(1,5,9,6,1)$ and its face lattice can be seen on the right.}
    \label{fig:traffic-lights}
 \end{figure}

We illustrate the concept of correlated equilibrium in an example: Two cars meet at a crossing. Both drivers would like to continue to drive, but, even more importantly, would also like to avoid a car crash. Thus, each of the drivers prefers not to drive in case the other chooses to drive. We make the assumptions that both drivers are unable to communicate with each other.
This is a classic game in game theory known as Chicken game or Hawk-Dove game, and we formalize this in \Cref{ex:traffic-lights-1,ex:traffic-lights-2,ex:traffic-lights-3,ex: correlated polytope}. However, this situation changes drastically if there is a traffic light installed at this crossing. We can view the traffic light as a neutral exterior party, that  gives a recommendation to each player in showing green or red lights; here we assume that each driver only knows their own given recommendation.
If a fixed driver is given such a recommendation (for example a red light), the driver now ponders about deviating from this recommendation in benefit of their own (selfish) good, assuming that the other player adheres to their own given recommendation. If both drivers decide not to deviate from the recommendation given by the traffic lights, a correlated equilibrium is achieved. \\

To our knowledge, there are no articles concerning the combinatorics of correlated equilibrium polytopes in the language of convex geometry up to this date, despite the fact that the concept of correlated equilibria is a topic of extensive research in economics and game theory. In this article, we study this class of polytopes from a combinatorial perspective. 
In general, the correlated equilibrium polytope can exhibit a great variety of distinct combinatorial structures. This is not surprising as it is proven that any convex polytope can be realized as the correlated equilibrium payoffs of an $n$-player game \cite{LESEV11}. Even classifying necessary conditions under which the correlated equilibrium polytope is of maximal dimension is highly nontrivial \cite{viossat_elementarygamesgames}. This is also an interesting question from a game theoretical perspective, in particular for the computation of Nash equilibria which is PPAD-complete \cite{daskalakis2009complexity}. In this case, Nash equilibria lie on the relative boundary of the correlated equilibrium polytope (\Cref{prop: nash equilibria on boundary}). However, there are examples of games where the correlated equilibrium polytope is not full dimensional and Nash equilibria lie in the relative interior \cite[Proposition 2]{nau_geometrynashequilibria}. To study full-dimensional correlated equilibrium polytopes, we introduce the \emph{region of full-dimensionality}, a set that classifies under which conditions the correlated equilibrium polytope has maximal dimension.

\begingroup
\hypersetup{hidelinks}
    \begin{manualtheorem}{\Cref{th:full-dim-semialgebraic}}
        The region of full-dimensionality is a semialgebraic set and can be explicitly described.
    \end{manualtheorem}
\endgroup

The full description of this semialgebraic set can be found on \cpageref{th:full-dim-semialgebraic}.
We continue the study of the combinatorial structure of correlated equilibrium polytopes by introducing a linear space called the \emph{correlated equilibrium space}, and consider an oriented matroid stratification inside this space. This is a stratification of the linear space, in which regions give rise to the different combinatorial types of correlated equilibrium polytopes. 
We study the algebraic boundary of the stratification for $(2\times n)$-games, which turns out to be generated by binomials corresponding to $(2 \times 2)$-minors and $(4 \times 4)$-minors of a certain matrix (\Cref{th:binomials}). 
These investigations yield novel insights into the possible combinatorial types of generic $(2 \times 3)$-games. The term \emph{generic games} is used in relation to the algebraic boundary explained in \Cref{th:binomials}.
\begingroup
  \hypersetup{hidelinks}
    \begin{manualtheorem}{\Cref{thm:2x3typesFromStrata}}
    Let $G$ be a generic $(2 \times 3)$-game and $P_G$ be the associated correlated equilibrium polytope. Then one of the following holds:
    \begin{itemize}
        \item $P_G$ is a point,
        \item $P_G$ is of maximal dimension $5$ and of a unique combinatorial type,
        \item There exists a $(2 \times 2)$-game $G'$ such that $P_{G'}$ has maximal dimension $3$ and is combinatorially equivalent to $P_G$. 
    \end{itemize}
    \end{manualtheorem}
\endgroup

Supported by the above theorem and our computations \cite{mathrepo} for generic $(2\times n)$-games $G$ (where $n\leq5$), we conjecture that if $P_G$ is not of maximal dimension, then there exists a smaller $(2 \times k)$-game $G'$ with $k <n$ such that $P_{G'}$ is a full-dimensional polytope and $P_G$ is combinatorially equivalent to $P_{G'}$ (\Cref{conj:2xntypes}). 

\subsection*{Overview}
We first provide a short introduction for the necessary concepts from game theory in \Cref{sec:correlated-polytope}, including correlated equilibria and Nash equilibria. In \Cref{sec:correlated-cone} we describe the correlated equilibrium cone, a convex polyhedral cone which captures the geometry of the correlated equilibrium polytope, and describe the correlated equilibrium space. 
We study the region of full-dimensionality in \Cref{sec:full-dim}. Finally, we consider the possible combinatorial types through the oriented matroid stratification in \Cref{sec:comb-types}. All results are illustrated with examples for games of types $(2 \times 2), \ (2 \times 3)$ and, whenever possible, games of type $(2 \times 2 \times 2)$.
All referenced code and a detailed study of some examples with visualizations can be found on MathRepo \cite{mathrepo}:

\begin{center}
    \url{https://mathrepo.mis.mpg.de/correlated-equilibrium}
\end{center}

\subsection*{Acknowledgements}
The authors would like to thank Rainer Sinn and Bernd Sturmfels for several helpful discussions. 

\section{The Correlated Equilibrium Polytope}\label{sec:correlated-polytope}

Let $n$ be the number of players in a normal-form game. Each player $i \in [n]$ has a fixed set of \emph{pure strategies} $s^{(i)}_{1},\dots s^{(i)}_{d_i}, d_i \in \N$. 
It is practical to think of each strategy as a single move that a player can play, and all players perform their single move simultaneously. Afterwards, the game is over, so the choices of the possible moves can be seen as the outcome of the game.
A \emph{pure joint strategy} is a tuple $s_{j_1 \dots j_n} = (s^{(1)}_{j_1}, \dots, s^{(n)}_{j_n})$ of strategies, where each player $i \in [n]$ chooses to play a fixed strategy $s^{(i)}_{j_i}$ with $j_i \in [d_i], i \in [n]$. The \emph{payoff} $X^{(i)}_{j_1 \dots j_n} \in \R$ of player $i$ at $s_{j_1 \dots j_n}$ is the quantity of how beneficial player $i$ values the combination $(s^{(1)}_{j_1},\dots, s^{(n)}_{j_n})$ of strategies as outcome of the game. 
A \emph{mixed strategy} of a single player $i$ is an action with probability $p^{(i)} = (p^{(i)}_{j_1}, \dots, p^{(i)}_{j_{d_i}})$, i.e.\ $p^{(i)}_{j_1}, \dots, p^{(i)}_{j_{d_i}} \geq 0$ and $\sum_{k = 1}^{d_i} p^{(i)}_{j_k} = 1$. 
We can view this geometrically as a point in the $(d_i - 1)$-dimensional probability simplex $\Delta_{d_i -1}$.

Formally, a $(d_1 \times \dots \times d_n)$-game in normal form is a tuple $G = (n, S, X)$, where $S =~(S^{(1)}, \dots , S^{(n)})$ is the collection of strategies $S^{(i)} = (s^{(i)}_1 , \dots , s^{(i)}_{d_i})$ of all players, and $X =~(X^{(1)}, \dots, X^{(n)})$ is the collection of all $(d_1 \times \cdots \times d_n)$-payoff tensors.
In particular, if $n=2$, then each $X^{(i)}$ is a $(d_1 \times d_2)$-matrix and called the \emph{payoff matrix} of player $i$. 

\begin{example}[Traffic Lights]\label{ex:traffic-lights-1}
Recall the example from the introduction, in which two cars meet at a crossing and would like to avoid a car crash. Formally, each player $i \in [2]$ has the strategies $s^{(i)}_1 = $ ``go'' and $s^{(i)}_2 = $ ``stop''. The bimatrix below shows the payoffs of both players simultaneously, where each entry is a tuple $(X^{(1)}_{j_1 j_2}, X^{(2)}_{j_1 j_2})$ of the payoff of each player for the tuple of strategies $(s^{(1)}_{j_1}, s^{(2)}_{j_2}), j_1, j_2 \in [2]$ as the expected outcome of the game.

 \begin{table}[H]
	\setlength{\extrarowheight}{2pt}
	\begin{tabular}{cc|c|c|}
		& \multicolumn{1}{c}{} & \multicolumn{2}{c}{Player $2$}\\
		& \multicolumn{1}{c}{} & \multicolumn{1}{c}{go}  & \multicolumn{1}{c}{stop} \\\cline{3-4}
		\multirow{2}*{Player $1$}  & go & $(-99, -99)$ & $(1,0)$ \\\cline{3-4}
		& stop & $(0,1)$ & $ (0,0) $ \\\cline{3-4} \\
	\end{tabular} \hspace*{2.8cm}
\end{table}

We may interpret the given payoffs such that each of the players prefers to go (with payoff $X^{(1)}_{12} = X^{(2)}_{21}=1$), if the other driver stops. However, both drivers choosing to drive is the worst possible outcome for each of the players individually (with payoff $X^{(i)}_{11}=-99$ for both players). 
Since there is no interaction between the players, and the payoffs of a car crash are very negative, the players are very likely not to risk a move, although this is not optimal for either of these players ($X^{(i)}_{22}=0$ for both players). This is the motivation for introducing the concept of correlated equilibrium, in which the assumptions allow a dependence of the moves of the players by the suggestion of an external party e.g.\ a traffic light.
We continue with this in \Cref{ex:traffic-lights-2,ex:traffic-lights-3}.
\end{example}

We now allow a third, independent party to influence the game from the outside. 
Let $\tilde{S} = \{s_{j_1 \dots j_n} \mid j_i \in [d_i], i \in [n]\}$ be the set of all pure joint strategies of the game. The external party draws such a pure joint strategy with probability $p_{j_1 \dots j_n} \geq 0$, called a \emph{mixed joint strategy}. Such a joint probability distribution is a vector (or a tensor) $p = (p_{j_1 \dots j_n} \mid j_i \in [d_i], i \in [n])$, such that $\sum_{j_1=1}^{d_1} \cdots \sum_{j_n=1}^{d_n} p_{j_1 \dots j_n} = 1$. The set of all joint probability distributions is the probability simplex $\Delta_{d_1 \cdots d_n -1}$.

\begin{definition}[Correlated Equilibrium]
	Let $G$ be a game with payoffs $X = (X^{(1)},\dots,X^{(n)})$. A point $p \in \Delta_{d_1 \cdots d_n -1}$ is a \emph{correlated equilibrium} if and only if 
	\begin{equation}\label{eq: correlated equilibria}
		\sum_{j_1=1}^{d_1} \cdots \widehat{\sum_{j_{i}=1}^{d_{i}}} \cdots \sum_{j_{n}=1}^{d_{n}}    \left(  X^{(i)}_{j_1\cdots j_{i-1} k j_{i+1} \cdots j_n}  - X^{(i)}_{j_1 \cdots j_{i-1} l j_{i+1} \cdots, j_n} \right) p_{j_1  \cdots j_{i-1} k j_{i+1} \cdots j_n} \geq 0.
	\end{equation}
	for all $k,l \in [d_i]$, and for all $i \in [n]$.
	The linear inequalities in \eqref{eq: correlated equilibria} together with the linear constrains 
	\[
        p_{j_1 \dots j_n} \geq 0 \text{ for } j_i \in [d_i], i \in [n], \text{ and } \sum_{j_1=1}^{d_1} \cdots \sum_{j_n=1}^{d_n} p_{j_1 \dots j_n} = 1\] 
define the set of all correlated equilibria of the game. The set of all such equilibria is the \emph{correlated equilibrium polytope} $P_G$ of the game $G$.
\end{definition}

The ambient space of the polytope $P_G$ has dimension $d_1 \cdots d_n$. By definition, the maximal dimension that $P_G$ can achieve is $d_1 \cdots d_n -1$. In the literature, in this case, $P_G$ is often called \emph{full-dimensional}. To avoid confusion with conventions in convex geometry, we refer to $P_G$ as having \emph{maximal dimension}.

\begin{example}[Traffic Lights]\label{ex:traffic-lights-2}
We continue with \Cref{ex:traffic-lights-1}. Here, the third party is given by a traffic light at the crossing. The traffic light gives the recommendation ``go'' to a driver if it turns green, and ``stop'' if it turns red. The traffic light draws randomly from one of the combinations of strategies. Let $p_{j_1 j_2}$ be the probability with which the traffic light draws the pure joint strategy $s_{j_1 j_2}$. The point $p = (p_{11},p_{12},p_{21},p_{22})$ is a correlated equilibrium if and only if $p_{j_1 j_2} \geq 0, \ p_{11}+p_{12}+p_{21}+p_{22} =1$ and
\begin{align*}
(X^{(1)}_{\text{22}} - X^{(1)}_{\text{12}})p _{\text{22}} +  (X^{(1)}_{\text{21}} - X^{(1)}_{\text{11}})p _{\text{21}} \geq 0, \qquad (X^{(1)}_{\text{12}} - X^{(1)}_{\text{22}})p _{\text{12}} +  (X^{(1)}_{\text{11}} - X^{(1)}_{\text{21}})p _{\text{11}} \geq 0, \\
(X^{(2)}_{\text{22}} - X^{(2)}_{\text{21}})p _{\text{22}} +  (X^{(2)}_{\text{12}} - X^{(2)}_{\text{11}})p _{\text{12}} \geq 0, \qquad
(X^{(2)}_{\text{21}} - X^{(2)}_{\text{22}})p _{\text{21}} +  (X^{(2)}_{\text{11}} - X^{(2)}_{\text{12}})p _{\text{11}} \geq 0.
\end{align*}
 
With the payoffs as given in the bimatrix in \Cref{ex:traffic-lights-1} these inequalities evaluate to
\begin{align*}
    &- p _{\text{22}} +   99 p _{\text{21}} \geq 0, 
    &p _{\text{12}}   -99 p _{\text{11}} \geq 0, \\
    &- p _{\text{22}} +   99 p _{\text{12}} \geq 0
    &p _{\text{21}} -99 p _{\text{11}} \geq 0.
\end{align*}

The correlated equilibrium polytope, i.e.\ the points $p$ that satisfy these inequalities, has $5$ vertices with coordinates $(0, 0, 1, 0),$
$(0, 1, 0, 0), $
$(\tfrac{1}{10000},\tfrac{99}{10000}, \tfrac{99}{10000}, \tfrac{9801}{10000}),$
 $(0, \tfrac{1}{101}, \tfrac{1}{101}, \tfrac{99}{101}),$
 $(\tfrac{1}{199}, \tfrac{99}{199}, \tfrac{99}{199},0)$. This polytope is depicted in \Cref{fig:traffic-lights-with-labels}.
\end{example}

\begin{figure}[t]
\includegraphics[scale=0.16]{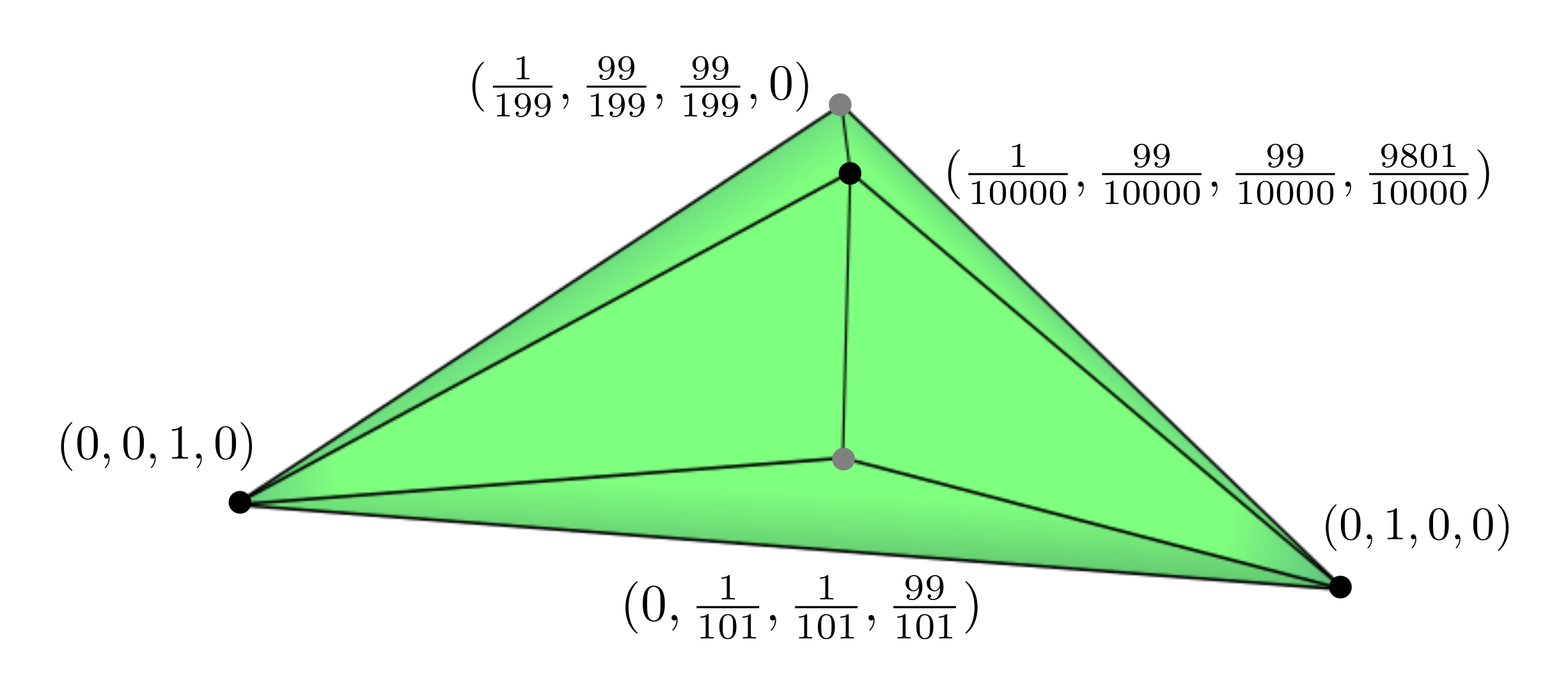}
\caption{The vertices of the correlated equilibrium polytope for the Traffic Lights example (\Cref{ex:traffic-lights-2})}
\label{fig:traffic-lights-with-labels}
\end{figure}

The next proposition shows that two affinely dependent games define the same correlated equilibrium polytope.

\begin{proposition}\label{prop:affine-transformation}
	Any affine linear transformation of the payoff tensors $X^{(i)}$ with positive scalars leaves the polytope $P_G$ invariant. More precisely, let $G = (n, S, X)$ be a game. For each $i \in [n]$, fix $t_i \in \R, \ \lambda_i \in \R_{>0}$ and let $\tilde{X}^{(i)}_{j_1 \dots j_n} = \lambda_i X^{(i)}_{j_1 \dots j_n} + t_i$ for all $j_k \in [d_k], k \in [n]$. Then for the game $\Tilde{G} = (n, \Tilde{X}, S)$ with $\tilde{X} = (\tilde{X}^{(1)}, \dots, \tilde{X}^{(n)})$ it holds that $P_G = P_{\Tilde{G}}$.
\end{proposition}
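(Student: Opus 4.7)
The plan is a direct computation showing that the defining inequalities of $P_G$ and $P_{\tilde{G}}$ coincide. The key observation is that the linear inequalities \eqref{eq: correlated equilibria} depend on the payoff tensor $X^{(i)}$ only through the differences $X^{(i)}_{j_1\cdots k \cdots j_n} - X^{(i)}_{j_1\cdots l \cdots j_n}$ for varying $k,l \in [d_i]$. The nonnegativity constraints $p_{j_1\dots j_n}\geq 0$ and the normalization $\sum p_{j_1\dots j_n}=1$ do not involve the payoffs at all, so they are unaffected by any change of $X$.

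Fix $i \in [n]$ and $k,l \in [d_i]$. First I would compute
\[
\tilde{X}^{(i)}_{j_1\cdots k\cdots j_n} - \tilde{X}^{(i)}_{j_1\cdots l\cdots j_n}
= \bigl(\lambda_i X^{(i)}_{j_1\cdots k\cdots j_n}+t_i\bigr) - \bigl(\lambda_i X^{(i)}_{j_1\cdots l\cdots j_n}+t_i\bigr)
= \lambda_i\bigl(X^{(i)}_{j_1\cdots k\cdots j_n} - X^{(i)}_{j_1\cdots l\cdots j_n}\bigr),
\]
so the shift $t_i$ cancels. Substituting this into the inequality \eqref{eq: correlated equilibria} for the game $\tilde G$ and pulling $\lambda_i$ out of the (hatted) sum gives
\[
\lambda_i \cdot \sum_{j_1=1}^{d_1} \cdots \widehat{\sum_{j_{i}=1}^{d_{i}}} \cdots \sum_{j_{n}=1}^{d_{n}} \bigl(X^{(i)}_{j_1\cdots k\cdots j_n} - X^{(i)}_{j_1\cdots l\cdots j_n}\bigr)\, p_{j_1\cdots k\cdots j_n} \geq 0.
\]

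Because $\lambda_i > 0$, dividing by $\lambda_i$ preserves the inequality, and we recover precisely the inequality \eqref{eq: correlated equilibria} for the original game $G$ at the same indices $i,k,l$. Running this argument for every $i\in[n]$ and every pair $k,l\in[d_i]$ shows that the two systems of linear inequalities cut out the same subset of the simplex $\Delta_{d_1\cdots d_n-1}$, hence $P_G = P_{\tilde G}$.

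There is no genuine obstacle here: the proof is a one-line calculation once one notices that the shift $t_i$ disappears under differencing and that the positive scaling $\lambda_i$ does not reverse the inequality. The only thing to be careful about is bookkeeping of the hatted index $j_i$ to make sure the factor of $\lambda_i$ can indeed be pulled outside the sum (which it can, since $\lambda_i$ does not depend on any summation index).
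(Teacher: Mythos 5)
Your proof is correct and matches the paper's argument essentially verbatim: both substitute $\tilde X^{(i)} = \lambda_i X^{(i)} + t_i$ into the defining inequalities, observe that the shifts $t_i$ cancel under differencing, factor out $\lambda_i$, and use $\lambda_i > 0$ to conclude the inequality systems coincide. No gaps.
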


\begin{proof}
	For each player $i \in [n]$ let $X^{(i)}$ be their payoff tensor, fix $t_i \in \R, \ \lambda_i \in \R_{>0}$, and consider the affine transformation $\tilde{X}^{(i)}_{j_1 \dots j_n} = \lambda_i X^{(i)}_{j_1\dots j_n} + t_i$. Then
	\begin{align*}
		&\sum_{j_1=1}^{d_1} \dots \widehat{\sum_{j_{i}=1}^{d_{i}}}  \dots \sum_{j_{n}=1}^{d_{n}}    \left(  \Tilde{X}^{(i)}_{j_1 \dots  j_{i-1} k j_{i+1}   \dots j_n}  - \Tilde{X}^{(i)}_{j_1 \ldots   j_{i-1} l j_{i+1}     \dots  j_n} \right) p_{j_1  \dots   j_{i-1} k j_{i+1}   \dots j_n} \\
		= &\sum_{j_1=1}^{d_1} \dots \widehat{\sum_{j_{i}=1}^{d_{i}}}  \dots \sum_{j_{n}=1}^{d_{n}}    \left(  \lambda_i X^{(i)}_{j_1 \dots j_{i-1} k j_{i+1}  \dots  j_n} + t_i  - \lambda_i X^{(i)}_{j_1 \dots  j_{i-1} l j_{i+1}   \dots j_n} - t_i \right) p_{j_1  \dots j_{i-1} k j_{i+1} \dots j_n} \\
		= & \lambda_i \sum_{j_1=1}^{d_1} \dots \widehat{\sum_{j_{i}=1}^{d_{i}} }  \dots \sum_{j_{n}=1}^{d_{n}}    \left(  X^{(i)}_{j_1 \dots j_{i-1} k j_{i+1}  \dots  j_n}  - X^{(i)}_{j_1 \dots j_{i-1} l j_{i+1}  \dots j_n} \right) p_{j_1  \dots j_{i-1} k j_{i+1} \dots j_n} \geq 0.
	\end{align*}
	Since $\lambda_i > 0$, this is equivalent to \eqref{eq: correlated equilibria} being nonnegative.
\end{proof}

\begin{figure}[t]   
\includegraphics[width=7cm]{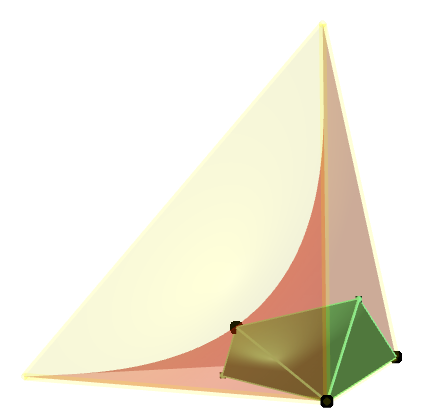}
\caption{A $3$-dimensional correlated equilibrium polytope (green) inside the probability simplex $\Delta_3$ (yellow) for a $(2\times 2)$-game. Its Nash equilibria (black) are the intersection with the Segre variety (red). This picture applies to the Traffic Lights example (\Cref{ex:traffic-lights-1,ex:traffic-lights-2,ex:traffic-lights-3}) as well as the Hawk-Dove game (\Cref{ex: correlated polytope}).}
\label{fig: correlated equilibria polytope} 
\end{figure}

\begin{definition}[Nash Equilibrium]\label{def: Nash equilibrium}
Let $G$ be a game.
A \emph{Nash equilibrium} of $G$ is a point $p \in P_G$ that is a tensor of rank one. More specifically, the set of Nash equilibria are those points in $P_G$ that are also contained in the image of the product map
\begin{align*}
	\varphi: \Delta_{d_1 -1} \times \cdots \times \Delta_{d_n -1} & \rightarrow \Delta_{d_1 \cdots d_n -1} \\
	(p^{(1)}, \dots, p^{(n)}) &\mapsto p^{(1)}_{j_1}\cdot \ldots \cdot p^{(n)}_{j_n}
\end{align*}

The image of this map is the \emph{Segre variety} 
inside $\Delta_{d_1 \cdots d_n -1}$. 
\end{definition}

\Cref{fig: correlated equilibria polytope} shows a $3$-dimensional correlated equilibrium polytope of a $(2 \times 2)$-game together with the Segre variety inside the simplex $\Delta_3$.
We illustrate this in more detail in the following example. 

\begin{example}[Traffic Lights]\label{ex:traffic-lights-3}
We continue with the Traffic Lights example from \Cref{ex:traffic-lights-1,ex:traffic-lights-2}. The Nash equilibria of this game occur as vertices of the correlated equilibrium polytope $P_G$. 
More precisely, they occur as the images of the points $(p^{(1)} , p^{(2)}) \in \Delta_2 \times \Delta_2$ with coordinates $((1, 0), (0,1)), \ ((0, 1), (1,0)) $ and $((\tfrac{1}{100}, \tfrac{99}{100}), ((\tfrac{1}{100}, \tfrac{99}{100}))$ under the product map $\varphi$ from \Cref{def: Nash equilibrium}, which correspond to the three black vertices in \Cref{fig:traffic-lights-with-labels}. With indexing $p = (p_{11},p_{12},p_{21},p_{22})$ the images of the first two points are $p = (0,1,0,0)$ and $(0,0,1,0)$. These are the probability distributions which correspond to the pure joint strategies in which one of the players drives, while the other one stops. The point $p = (\tfrac{1}{10000},\tfrac{99}{10000}, \tfrac{99}{10000},\tfrac{9801}{10000})$ is a probability distribution in which it is most likely that both players stop. 

\end{example}
By \Cref{def: Nash equilibrium}, the set of Nash equilibria is a subset of the correlated equilibrium polytope. However, characterizing Nash equilibria is computationally difficult. Therefore, it is of interest to understand where the Nash equilibria lie relative to the correlated equilibrium polytope \cite{papadimitriou05_computingequilibriamulti}. A game $G$ is called non-trivial if $X^{(i)}_{j_1 \ldots j_{i-1} k j_{i+1}, \dots j_n} \neq X^{(i)}_{j_1 \ldots j_{i-1} l j_{i+1}, \ldots j_n}$ for some player $i \in [n]$ and $k,l \in [d_i]$ with $k \neq l$. The next result states that if $P_G$ is of maximal dimension, then any Nash equilibrium lies on a proper face of $P_G$. 
\begin{proposition}
	[{\cite[Proposition 1]{nau_geometrynashequilibria}}]
	\label{prop: nash equilibria on boundary}
	Let $G$ be a non-trivial game. Then the Nash equilibria lie on a face of the correlated equilibrium polytope $P_G$ of dimension at most $d_1 \cdots  d_n -2$. In particular, if $P_G$ has maximal dimension $d_1 \cdots  d_n -1$, then the Nash equilibria lie on the relative boundary of $P_G$.
\end{proposition}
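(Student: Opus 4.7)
The plan is to exploit the product structure of Nash equilibria together with the best-response property to produce, for each Nash equilibrium $p$, one of the correlated equilibrium inequalities \eqref{eq: correlated equilibria} that is tight at $p$ and whose coefficient vector is nonzero. The nonzero coefficient vector will be guaranteed by the non-triviality hypothesis, and the tight inequality will then place $p$ on a proper affine hyperplane section of $P_G$.

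First, I would write a Nash equilibrium as $p = p^{(1)}\otimes \cdots \otimes p^{(n)}$ and substitute this into the left-hand side of \eqref{eq: correlated equilibria}. Using that $p_{j_1\cdots k \cdots j_n} = p^{(i)}_k \prod_{m\neq i}p^{(m)}_{j_m}$, the left-hand side factors as $p^{(i)}_k\,\bigl(u_i(k,p^{(-i)}) - u_i(l,p^{(-i)})\bigr)$, where $u_i(\,\cdot\,,p^{(-i)})$ denotes the expected payoff of player $i$ against the product $p^{(-i)} = \otimes_{m\neq i}p^{(m)}$. Next, using non-triviality, fix $i \in [n]$ and $k\neq l \in [d_i]$ such that the tensor $X^{(i)}_{\cdots k \cdots} - X^{(i)}_{\cdots l \cdots}$ (indexed by the remaining coordinates) is not identically zero. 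Possibly relabeling $k$ and $l$, I split into three cases according to the comparison of $u_i(k,p^{(-i)})$ and $u_i(l,p^{(-i)})$: if the two expected payoffs are equal, then the factored expression vanishes; if they are unequal, then the Nash best-response property forces the marginal probability attached to the strategy with smaller expected payoff to be zero, which again makes the factor vanish. In every case, one of the inequalities \eqref{eq: correlated equilibria} (with the pair $(k,l)$ possibly swapped) becomes an equality at $p$.

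The final step is to argue that this equality actually cuts out a proper face of $P_G$. The coefficient vector of the chosen inequality is $(X^{(i)}_{\cdots k \cdots} - X^{(i)}_{\cdots l \cdots})$, which by construction is not the zero tensor, so the corresponding linear functional is not constant on the simplex $\Delta_{d_1\cdots d_n - 1}$. Consequently, the hyperplane it defines meets the affine hull of $P_G$ in a subspace of codimension at least one, so the intersection $H\cap P_G$ is a face of dimension at most $d_1\cdots d_n - 2$ containing $p$. In particular, if $P_G$ has maximal dimension $d_1\cdots d_n - 1$, this face lies in the relative boundary, yielding the second assertion.

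The main obstacle I expect is the very last point: ensuring that the tight hyperplane is not vacuous on $P_G$, i.e.\ that it really carves out a proper face rather than containing all of $P_G$. This is where the precise formulation of non-triviality matters, since it is exactly what guarantees that at least one of the difference tensors $X^{(i)}_{\cdots k \cdots} - X^{(i)}_{\cdots l \cdots}$ is nonzero, and hence that the associated linear functional genuinely separates points of the ambient simplex. A secondary subtlety worth handling cleanly is the choice of orientation of the pair $(k,l)$ in the strict-inequality case, since only one of the two inequalities \eqref{eq: correlated equilibria} associated to an unordered pair $\{k,l\}$ may be tight at $p$; keeping careful track of this relabeling is what makes the case analysis go through uniformly.
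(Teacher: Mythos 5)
Your argument is correct, and there is no internal proof in the paper to compare it with: the proposition is quoted from Nau, G\'omez Canovas and Hansen without proof. Your route --- factor the left-hand side of \eqref{eq: correlated equilibria} at a product distribution as $p^{(i)}_k\bigl(u_i(k,p^{(-i)})-u_i(l,p^{(-i)})\bigr)$, use non-triviality to pick a pair $(k,l)$ with nonzero difference tensor, and use the best-response property to force one of the two oriented incentive constraints for $\{k,l\}$ to be tight --- is essentially the argument of that reference, so the substance of your proof is fine. Two points in your last step deserve sharpening. First, the coefficient vector of the tight inequality is not the difference tensor itself but that tensor placed in the slice $j_i=k$ (respectively $j_i=l$), with zeros in all other slices; non-constancy on the simplex does not follow from being nonzero alone (the all-ones vector is nonzero yet constant on the simplex), but it does follow here because the vector is nonzero on one slice and identically zero on another, which uses $d_i\geq 2$. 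Second, your claim that the tight hyperplane meets the affine hull of $P_G$ in codimension at least one can fail when $P_G$ is not of maximal dimension, since then $P_G$ may be entirely contained in that hyperplane; the clean statement is that the tight set is a face of $P_G$ (the inequality is one of the defining inequalities, hence valid) contained in the intersection of the hyperplane with the probability hyperplane, an affine space of dimension $d_1\cdots d_n-2$, so the face has dimension at most $d_1\cdots d_n-2$ in every case, and it is a proper face precisely when $P_G$ has maximal dimension, which gives the relative-boundary conclusion.
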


In order to locate the possible positions of Nash equilibria, it is thus helpful to understand the conditions under which $P_G$ is of maximal dimension. In \Cref{sec:full-dim} we study the region of full-dimensionality, which formalizes these conditions.

\section{The Correlated Equilibrium Cone}\label{sec:correlated-cone}

The combinatorics of the correlated equilibrium polytope is completely determined by the cone given by the inequality constraints \eqref{eq: correlated equilibria}, intersected with the nonnegative orthant. The \emph{correlated equilibrium cone} $C_G \subseteq \R^{d_1 \cdots d_n}$ is the polyhedral cone defined by inequalities $p_{j_1 \dots j_n} \geq 0$ and 
	\begin{equation}\label{eq: correlated equilibria cone}
	\sum_{j_1=1}^{d_1} \cdots \widehat{\sum_{j_{i}=1}^{d_{i}}}  \cdots \sum_{j_{n}=1}^{d_{n}}    \left(  X^{(i)}_{j_1\cdots j_{i-1} k j_{i+1} \cdots j_n}  - X^{(i)}_{j_1 \cdots  j_{i-1} l j_{i+1} \cdots j_n} \right) p_{j_1  \cdots j_{i-1} k j_{i+1} \cdots j_n} \geq 0.
\end{equation}
for all $k,l \in [d_i]$, and for all $i \in [n]$. For each player $i \in [n]$ this defines $d_i(d_i - 1)$ nontrivial inequalities of type \eqref{eq: correlated equilibria cone}. The cone $C_G$ is a convex pointed polyhedral cone. The correlated equilibrium polytope $P_G$ is the intersection of the cone with the hyperplane where the sum of all coordinates equals $1$. Therefore, a facet of $P_G$ is in bijection with a facet of $C_G$, and a vertex of $P_G$ is in bijection with an extremal ray of $C_G$.

We make a substitution of the coefficients in \eqref{eq: correlated equilibria cone}. For each $i \in [n], j_1 \in [d_1], \dots, j_{i-1} \in~[d_{i-1}]$, $k,l \in [d_i], \ j_{i+1} \in [d_{i+1}],\dots , j_n \in [d_n]$ we define
\[
    Y^{(i)}_{j_1\cdots\hat{j_i}\cdots j_n}(k,l) = X^{(i)}_{j_1\cdots j_{i-1} k j_{i+1} \cdots j_n}  - X^{(i)}_{j_1 \cdots j_{i-1} l j_{i+1} \cdots j_n} .
\]
Note that $Y^{(i)}_{j_1 \dots \hat{j_i} \dots j_n}(k,l) = - Y^{(i)}_{j_1 \dots \hat{j_i} \dots j_n}(l,k)$.
Thus, for each player $i \in [n]$ this defines $\binom{d_{i}}{2} \prod_{\substack{k \in [n] \\ k \neq i}} d_k $ distinct variables, so in total this defines 
\[
    M = \sum_{i = 1}^n \binom{d_{i}}{2} \prod_{\substack{k \in [n] \\ k \neq i}} d_k
\]
many variables. Under this substitution, the above inequality becomes
	\begin{equation}\label{ieqs-in-Y}
	\sum_{j_1=1}^{d_1} \cdots \widehat{\sum_{j_{i}=1}^{d_{i}}}  \cdots \sum_{j_{n}=1}^{d_{n}}    Y^{(i)}_{j_1 \cdots \hat{j_i} \cdots j_n} (k,l) \   p_{j_1  \cdots j_n} \geq 0.
\end{equation}

For fixed $i \in [n], k,l \in [d_i]$, let $U^{(i)}_{k l} \in \R^{d_1 \cdots d_n}$ be the vector with entries
\[
	(U^{(i)}_{kl})_{j_1 \dots j_i \dots j_n} = 	\begin{cases}
		Y^{(i)}_{j_1 \dots \hat{j_i} \dots j_n}(k,l) & \text{if } j_i = k  \text{ and } k<l \\[10pt]
		-Y^{(i)}_{j_1 \dots \hat{j_i} \dots j_n}(k,l) & \text{if } j_i = k  \text{ and } k>l \\[10pt]
		0 & \text{otherwise}
	\end{cases}
\]
for each coordinate indexed by $j_1 \in [d_1], \dots, j_i \in [d_i], \dots, j_n \in [d_n]$.
Using the same indexing of coordinates for $p \in \R^{d_1 \cdot \ldots \cdot d_n}$, 
the inequalities \eqref{ieqs-in-Y} can be expressed as the inner product $\langle U^{(i)}_{kl} , p \rangle \geq 0$.

\begin{example}[$(2 \times 2)$-games]\label{ex:general-2x2-game}
	Consider a $2$-player game with $d_1 = d_2 = 2$. We fix the indexing $p = (p_{11}, p_{12}, p_{21}, p_{22})$. 
	Recall that the inequalities are
		\begin{align*}
(X^{(1)}_{11}-X^{(1)}_{21}) p_{11} + (X^{(1)}_{12}-X^{(1)}_{22}) p_{12} &= Y^{(1)}_1(1,2)\ p_{11} + Y^{(1)}_2(1,2)\ p_{12} \geq 0 \\
(X^{(1)}_{21}-X^{(1)}_{11}) p_{21} + (X^{(1)}_{22}-X^{(1)}_{12} ) p_{22} &= 	-Y^{(1)}_1(1,2)\ p_{21}  -Y^{(1)}_2(1,2)\ p_{22} \geq 0 \\
(X^{(2)}_{21}-X^{(2)}_{22}) p_{21} + (X^{(2)}_{11}-X^{(2)}_{12}) p_{11} 
			&= Y^{(2)}_2(1,2)\ p_{21} + Y^{(2)}_1(1,2)\ p_{11}
			\geq 0 \\
(X^{(2)}_{22}-X^{(2)}_{21}) p_{22} + (X^{(2)}_{12}-X^{(2)}_{11}) p_{12} 
			&= -Y^{(2)}_2(1,2)\ p_{22} - Y^{(2)}_1(1,2)\ p_{12}
			\geq 0. 
		\end{align*}
	The vectors $U^{(i)}_{kl}$ have entries in the $4$ unknowns $Y^{(1)}_1(1,2)\ , Y^{(1)}_2(1,2)\ , Y^{(2)}_1(1,2)\ , Y^{(2)}_2(1,2)$. More specifically,
	\begin{align*}
		U^{(1)}_{12} &= (Y^{(1)}_1(1,2)\ , Y^{(1)}_2(1,2)\ , 0 , 0) \\
		U^{(1)}_{21} &= -(0, 0, Y^{(1)}_1(1,2)\ , Y^{(1)}_2(1,2)) \\
		U^{(2)}_{12} &= (Y^{(2)}_1(1,2)\ , 0 , Y^{(2)}_2(1,2)\ , 0) \\
		U^{(2)}_{21} &= -(0, Y^{(2)}_1(1,2)\ , 0 , Y^{(2)}_2(1,2)).
	\end{align*}
The cone $C_G$ is defined by the inequalities $\langle U^{(i)}_{kl}, p \rangle \geq 0$ for $i \in [2]$, and $k, l \in [2], k \neq l$,  and the inequalities $\langle e_i, p  \rangle \geq 0$, where $e_{i}$ denote the standard basis vectors of $\R^{4}$. \\
\end{example}

Recall that the number of variables defined above is $ M = \sum_{i = 1}^n \binom{d_{i}}{2} \prod_{\substack{k \in [n] \\ k \neq i}} d_k$.
The ambient dimension of the correlated equilibrium polytope and cone is $D = \prod_{i=1}^n d_i$, and
the number of linear inequalities of the form \eqref{ieqs-in-Y} is $N = \sum_{i=1}^n d_i(d_i - 1)$. Let $U = U(Y) \in \R^{N \times D}$ be the matrix with rows $U^{(i)}_{kl}$ for $i \in [n], k,l \in [d_i], k\neq l$, and let $A(Y) \in \R^{(D+N)\times D}$ be the block matrix
\[
    A(Y) = \ma U \\ Id_{D} \trix.
\]
By \eqref{ieqs-in-Y}, the cone $C_G = C(Y)$ is given by
\[
    C(Y) = \{p \in \R^D \mid A(Y)  p \geq 0 \}.
\]

The matrix $A(Y)$ has full rank $D$, and so $C(Y)$ is a pointed cone. \\

For $(d_1 \times \dots \times d_n)$-games, where $d_i \geq 3$ for some $i \in [n]$, we have additional relations
\begin{equation}\label{linear-space}
    Y^{(i)}_{j_1\cdots\hat{j_i}\cdots j_n}(k,l) + Y^{(i)}_{j_1\cdots\hat{j_i}\cdots j_n}(l,t) = Y^{(i)}_{j_1\cdots\hat{j_i}\cdots j_n}(k,t)
\end{equation}
for $j_1 \in [d_1], \dots, j_{i-1} \in [d_{i-1}], \ k,l,t \in [d_i],\  j_{i+1} \in [d_{i+1}], \dots, j_n \in [d_n]$. A vector $Y \in \R^M$ corresponds to a certain game $G$ if and only if these relations hold. Geometrically, these relations define a linear space inside $\R^M$. We thus make the following definition.

\begin{definition}
The \emph{correlated equilibrium space} $\CES \subseteq \R^M$ of $(d_1 \times \dots \times d_n)$-games is the linear space defined by the equations \eqref{linear-space}, where $i \in [n]$ ranges over all players with at least $3$ strategies $d_i \geq 3$. If all players have at most $2$ strategies, then no such relation among the variables holds, and the correlated equilibrium space is the entire ambient space $\CES = \R^M$.
\end{definition}

\begin{example}[$\CES$ for $(2\times 3)$-games]
 In a $(2\times 3)$-game, there are six variables
 \begin{align*}
    Y^{(1)}_1(1,2) = X^{(1)}_{11} - X^{(1)}_{21}, \qquad
    Y^{(1)}_2(1,2) = X^{(1)}_{12} - X^{(1)}_{22}, \qquad
    Y^{(1)}_3(1,2) = X^{(1)}_{13} - X^{(1)}_{23}, \\
    Y^{(2)}_1(1,2) = X^{(2)}_{11} - X^{(2)}_{12}, \qquad
    Y^{(2)}_1(1,3) = X^{(2)}_{11} - X^{(2)}_{13}, \qquad
    Y^{(2)}_1(2,3) = X^{(2)}_{12} - X^{(2)}_{13}, \\
    Y^{(2)}_2(1,2) = X^{(2)}_{21} - X^{(2)}_{22}, \qquad
    Y^{(2)}_2(1,3) = X^{(2)}_{21} - X^{(2)}_{23}, \qquad
    Y^{(2)}_2(2,3) = X^{(2)}_{22} - X^{(2)}_{23}. 
 \end{align*}
 The relations among these variables are
  \begin{align*}
    Y^{(2)}_1(1,2) + Y^{(2)}_1(2,3) = Y^{(2)}_1(1,3), \qquad
    Y^{(2)}_2(1,2) + Y^{(2)}_2(2,3) = Y^{(2)}_2(1,3).
 \end{align*}
 These relations cut out the correlated equilibrium space $\CES$ for $(2 \times 3)$-games.
 For any game $(2\times 3)$-games $G$ the correlated equilibrium polytope is nonempty, a point $Y \in \R^6$ defines a correlated equilibrium cone if and only if it satisfies the above relations.
\end{example}

\begin{remark}
    In the following sections, we will classify correlated equilibrium polytopes and cones with respect to the variables $Y$ instead of the payoff tensors $X$. We note that this is not a significant restriction, as this is a linear change of coordinates and thus does not change the geometry of the objects described in what follows, provided one restricts to the correlated equilibrium space $\CES$.
\end{remark}

\section{The Region of Full-Dimensionality}\label{sec:full-dim}

In this section, we introduce the region of full-dimensionality for a type of game. For fixed $d_i \in~\mathbb N, i \in [n]$, this region classifies for which $(d_1 \times \dots \times d_n)$-games the polytope $P_G$ is of maximal dimension $D - 1 = d_1 \cdots d_n - 1$. 
The connections between full-dimensionality and elementary games are discussed in \cite[Section 3]{viossat_elementarygamesgames}. The game is elementary if and only if none of the incentive constraints is
vacuous and $P_G$ has full dimension. In general, it is not understood under which conditions $P_G$ has maximal dimension (i.e.\ $G$ is a \emph{full game}), though there are some partial results on forbidden dimensions \cite[Proposition 7]{viossat10_propertiesapplicationsdual}. Exploring full-dimensional correlated equilibrium polytopes provides valuable insights from a game theoretical perspective as in this case, Nash equilibria lie on the relative boundary of $P_G$ (\Cref{prop: nash equilibria on boundary}). Since computing Nash equilibria is PPAD-complete \cite{daskalakis2009complexity} and correlated equilibria are computationally less expensive, it is a meaningful pursuit to study the combinatorial properties of full-dimensional $P_G$. 

Let $Y \in \CES \subseteq \R^M$ be a vector of indeterminates, as described in \Cref{sec:correlated-cone}. We consider $A = A(Y)$ as a matrix with indeterminates, where each choice of $Y \in \R^M$ uniquely defines a correlated equilibrium cone
\[
    C(Y) = \{p \in \R^N \mid A(Y)  p \geq 0 \}.
\]

Recall that the correlated equilibrium polytope $P_G(Y)$ is of maximal dimension if and only if $C(Y)$ is full-dimensional. Thus, we are interested in the \emph{region of full-dimensionality}
\[
    \mathcal D = \{Y \in \CES \subseteq \R^M \mid \dim(C(Y))=D \}.
\]

\begin{definition}
    A \emph{semialgebraic set} is a subset of $\R^M$ defined by a boolean combination of finitely many polynomial inequalities.
\end{definition}

\begin{theorem}\label{th:full-dim-semialgebraic}
    The region $\mathcal  D$ of full-dimensionality is the semialgebraic set
    \[
    \pi_Y(\{(x,Y) \in \R^{D+M} \mid A(Y)x>0 \}) \cap \CES
    \]
    where $\pi_Y$ is the coordinate projection onto the last $M$ coordinates. 
\end{theorem}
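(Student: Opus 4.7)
The plan is to reduce the statement to two pieces: first, establish the claimed equality of sets, and second, verify that the right-hand side is semialgebraic.

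For the equality, I would argue that for every $Y \in \CES$, the cone $C(Y) = \{p \in \R^D \mid A(Y)p \geq 0\}$ has dimension $D$ if and only if there exists $x \in \R^D$ with $A(Y)x > 0$. Since $C(Y)$ is a polyhedral cone in $\R^D$ defined by finitely many homogeneous linear inequalities, its dimension equals $D$ precisely when its topological interior is nonempty, and that interior is exactly $\{x \in \R^D \mid A(Y)x > 0\}$. (Here I am using that $A(Y)$ has full column rank $D$, which was already observed in \Cref{sec:correlated-cone}, so that strict feasibility describes the interior rather than merely a relative interior of a lower-dimensional face.) Combining this equivalence with the definition
\[
    \mathcal{D} = \{Y \in \CES \mid \dim C(Y) = D\}
\]
yields immediately that $Y \in \mathcal{D}$ iff $Y \in \CES$ and there is some $x$ with $(x,Y)$ in the strict feasibility region, which is the asserted projection description.

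For the semialgebraicity claim, I would observe that each entry of the vector $A(Y)x$ is bilinear in $(x,Y)$, so the strict feasibility set $\{(x,Y)\in\R^{D+M}\mid A(Y)x>0\}$ is a semialgebraic subset of $\R^{D+M}$, being the intersection of finitely many open polynomial half-spaces. The Tarski–Seidenberg theorem then guarantees that its image under the coordinate projection $\pi_Y$ is semialgebraic in $\R^M$. Finally, $\CES$ is a linear subspace of $\R^M$ (cut out by the relations \eqref{linear-space}), hence semialgebraic, and the intersection of two semialgebraic sets is semialgebraic.

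The main conceptual work is the first equivalence between full-dimensionality and strict feasibility; everything else is a direct application of the Tarski–Seidenberg theorem. The only subtlety worth double-checking is that $\CES$ cannot be omitted from the description: strict feasibility of $A(Y)x>0$ makes sense for arbitrary $Y\in\R^M$, but only vectors in $\CES$ correspond to an actual game, so the intersection at the end is essential. The assertion that the set can also be described explicitly is then a matter of running quantifier elimination on the projected formula — in principle algorithmic, but concretely requiring a Farkas/Gordan-type alternative applied to the specific combinatorial shape of $A(Y)$ recorded in \Cref{sec:correlated-cone}.
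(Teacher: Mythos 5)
Your proposal matches the paper's proof essentially step for step: full-dimensionality of $C(Y)$ is equated with strict feasibility of $A(Y)x>0$, that strict-feasibility set is semialgebraic because the entries of $A(Y)x$ are (bilinear) polynomials in $(x,Y)$, the Tarski–Seidenberg theorem gives semialgebraicity of the projection $\pi_Y$, and one intersects with $\CES$. The only quibble is your parenthetical appeal to full column rank: the step ``interior of $C(Y)$ equals the strict region'' really requires that no row of $A(Y)$ is identically zero (a point the paper also leaves implicit), since full column rank is guaranteed by the identity block regardless and does not rule out zero rows among the $U^{(i)}_{kl}$ for degenerate payoffs.
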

\begin{proof}
    The cone $C(Y)$ is full-dimensional if and only if it has nonempty interior, i.e.\ if there exists some $p \in \R^D$ such that $A(Y)p >0$. Let $x = (x_{j_1 \cdots j_n} \mid i \in [n], j_i \in [d_i])$ be a vector of $D$ indeterminates. Consider the set
    \[
        \{(x,Y) \in \R^{D+M} \mid A(Y)x>0 \}.
    \]
    The expression $A(Y)x$ defines a $(D+N)$-dimensional vector, where each coordinate is a polynomial in variables $x$ and $Y$. Hence, the set $\{(x,Y) \in \R^{D+M} \mid A(Y)x>0 \}$ is defined by $D+N$ polynomial inequalities, and is thus a (basic) semialgebraic set. The region $\mathcal D$ of full-dimensionality is the intersection of the correlated equilibrium space $\CES$ with a coordinate projection of this set, which can be obtained by projecting away the $x$-coordinates. The Tarski–Seidenberg theorem implies that the coordinate projection is semialgebraic, and hence $\mathcal D$ is a semialgebraic set. 
\end{proof}

\begin{example}[$\mathcal D$ for $(2 \times 2)$-games]\label{ex:full-dim-2x2}
    Consider a $(2 \times 2)$-game. In this case, the ambient dimension of the correlated equilibrium polytope is $D = 4$, the number of incentive constraints is $N = 4$ (so the number of inequalities that define the polytope is $D+N = 8$) and the ambient dimension of $\mathcal D \subseteq \CES = \R^M$ is $M = 4$. The different combinatorial types in this case have been fully classified in \cite{calvoarmengol03_setcorrelatedequilibria}. Here, $\mathcal D \subseteq \R^4$ is the union of two open orthants:
    \begin{align*}
        a) \ &Y^{(1)}_{1}(1,2) > 0, \ Y^{(1)}_{2}(1,2) < 0, \  Y^{(2)}_{1}(1,2) > 0 , \ Y^{(2)}_{2}(1,2) < 0 \\
        b) \ &Y^{(1)}_{1}(1,2) < 0, \ Y^{(1)}_{2}(1,2) > 0 , \ Y^{(2)}_{1}(1,2) < 0 , \ Y^{(2)}_{2}(1,2) > 0
    \end{align*}
    The file \texttt{dimensions2x2.nb} in \cite{mathrepo} contains \texttt{Mathematica} code \cite{Mathematica} which also computes these inequalities.
\end{example}

\begin{example}[$\mathcal D$ for $(2 \times 3)$-games]\label{ex:full-dim-2x3}
    The coordinate projection of the set $\{(x,Y) \in \R^{D+M} \mid A(Y)x>0 \}$ onto $\R^M$ is the union of basic semialgebraic sets, where each piece is the intersection of an orthant with a binomial inequality. 
    One of the pieces is
    \begin{align*}
        &Y^{(1)}_{1}(1,2) > 0, \ Y^{(1)}_{2}(1,2) > 0,    \ Y^{(1)}_{3}(1,2) < 0,    \\
        &Y^{(2)}_{1}(1,2) < 0, \ Y^{(2)}_{1}(1,3) < 0, \ Y^{(2)}_{1}(2,3) > 0, \ Y^{(2)}_{2}(1,2) > 0,    \ Y^{(2)}_{2}(1,3) > 0,    \ Y^{(2)}_{2}(2,3) < 0, \\ \
        &Y^{(2)}_{2}(1,3) \ Y^{(2)}_{1}(2,3) < Y^{(2)}_{1}(1,3) \ Y^{(2)}_{2}(2,3).
    \end{align*}
    The region $\mathcal D$ of full-dimensionality for $(2 \times 3)$-games consists of the intersection of the above mentioned pieces with the correlated equilibrium space $\CES$.
    The \texttt{Mathematica} file \texttt{dimensions2x3.nb} in \cite{mathrepo} contains our code for computing all of the components of this semialgebraic set. We note that the orthants appearing in $\mathcal{D}$ seem highly structured. 
\end{example}

\begin{remark}\label{rmk:full-dim-symmetry}
The structured behavior of the regions of full-dimensionality that we see in $(2\times 3)$-games arises more generally for arbitrary $(2\times n)$-games. Exploiting the structure of the matrix $A(Y)$, 
the region $\mathcal{D}$ of full-dimensionality for $(2\times n)$-games \textbf{cannot} satisfy any of the following conditions:
\begin{itemize}
    \item $Y_k^{(1)} (1,2)> 0$, for all $k \in [n]$.
    \item $Y_k^{(1)} (1,2)< 0$, for all $k \in [n]$. 
    \item $Y_1^{(2)} (k,n)>0$ and $Y_2^{(2)} (k,n)>0$, for all $k \in [n]$
    \item $Y_1^{(2)} (k,n)<0$ and $Y_2^{(2)} (k,n)<0$, for all $k \in [n]$.\\
\end{itemize}
\end{remark}

While this approach could theoretically be used to obtain inequalities for larger games, this is extremely difficult in practice since the required algebraic methods do not scale well as the number of variables involved increases. For example, we were unable to carry out this computation for $(2\times 2 \times 2)$-games since we must compute the coordinate projection of a semialgebraic set that lives in $D+M = 20$-dimensional space.

\section{Combinatorial Types of Correlated Equilibrium Polytopes}\label{sec:comb-types}

In this section, we consider how to systematically classify combinatorial types of polytopes arising as a correlated equilibrium polytope. The {\emph{face lattice}} of a polytope $P$ is the poset of all the faces of $P$, partially ordered by inclusion. Two polytopes have the same {\emph{combinatorial type}} if there exists an isomorphism between their face lattices. \\
First, we present a systematic approach for classifying the possible combinatorial types for arbitrary games by describing the stratification by oriented matroids. Even for small examples, the explicit computation of the oriented matroid strata is beyond current scope, but we introduce algebraic methods for understanding the oriented matroid strata via their algebraic boundaries. 
We use this technique to completely classify the combinatorial types of $P_G$ for $(2\times 3)$-games (\Cref{thm:2x3typesFromStrata}). We then show that for all $(2\times n)$-games the irreducible components of the algebraic boundary of the oriented matroid stratification are coordinate hyperplanes, $(2\times 2)$-minors and $(4 \times 4)$-minors of the matrix $A(Y)$  (\Cref{th:binomials}).

\begin{example}[Hawk-Dove game]\label{ex: correlated polytope}

This game models a scenario of a competition for a shared resource. Both players can choose between conflict (hawk) or conciliation (dove). This game is a generalization of the Traffic Lights example (\Cref{ex:traffic-lights-1,ex:traffic-lights-2,ex:traffic-lights-3}). The inequalities for general $(2 \times 2)$-games are given in \Cref{ex:general-2x2-game}. In the Hawk-Dove game, each of the two players has two strategies $s^{(i)}_1$=``hawk'' or $s^{(i)}_2$=``dove''. 
 \begin{table}[h]
    \setlength{\extrarowheight}{2pt}
    \begin{tabular}{cc|c|c|}
      & \multicolumn{1}{c}{} & \multicolumn{2}{c}{Player $2$}\\
      & \multicolumn{1}{c}{} & \multicolumn{1}{c}{Hawk}  & \multicolumn{1}{c}{Dove} \\\cline{3-4}
      \multirow{2}*{Player $1$}  & Hawk & $\left(\frac{V-C}{2},\frac{V-C}{2}\right)$ & $(V,0)$ \\\cline{3-4}
      & Dove & $(0,V)$ & $\left(\frac{V}{2},\frac{V}{2}\right)$ \\\cline{3-4}
    \end{tabular} \hspace*{1cm}
  \end{table}
  
In this bimatrix game, $V$ represents the value of the resource and $C$ represents the cost of the escalated fight. It is mostly assumed that $C > V>0$. 
The correlated equilibria polytope $P_G$ is full-dimensional with $5$ vertices and $6$ facets. 
In the case $V\geq C>0$, the game becomes an example for {\emph{Prisoner's Dilemma}}, in which case $P_G$ is a single point. 
\end{example}

As seen in the previous example, for fixed $d_1, \dots, d_n \in \N$, different choices of the payoffs for the players in a $(d_1 \times \dots \times d_n)$-game can result in different combinatorial types of correlated equilibria. 
We would thus like to classify the regions of the correlated equilibrium space $\mathcal{S} \subseteq \R^M$ such that 
\[
    \{Y \in \CES \mid C(Y) \text{ has a fixed combinatorial type} \}.
\]

We now explain how the combinatorial type of $C(Y)$ is completely determined by the underlying oriented matroid defined by the matrix $A(Y)$.
The combinatorial type of $C(Y)$ is the incidence structure of rays and facets of $C(Y)$. Equivalently, we can classify the incidence structure of facets and rays of the dual cone $C(Y)^\ast$.
The \emph{dual} of the cone $C(Y)$ is defined as 
\[
    C(Y)^* = \{q \in \R^{d_1 \cdots d_n} \mid \langle p, q \rangle \geq 0, \  \forall p \in C(Y) \} .
\]
By definition, the (inner) facet normals of $C(Y)$ are generators of extremal rays of $C(Y)^*$ and vice versa. 
Let $h \in [D+N]$ and $A_h(Y)$ be a row of $A(Y)$. Seen as a linear functional, this row uniquely selects a face
\[
    F_h = \{p \in C(Y) \mid \langle A_h(Y) , p \rangle = 0 \}.
\]
Note that this is not necessarily a facet of $C(Y)$, but all facets of $C(Y)$ arise in this way. 
For the dual cone $C(Y)^\ast$ this implies that $r_h = A_h(Y) \R_{\geq 0}$ is an extremal ray of $C(Y)^*$ if and only if $F_h$ is a facet of $C$.
If $C(Y)$ is not full-dimensional, then there is some $h \in [D+N]$ such that $F_h = C(Y)$. The set of all such vectors span the \emph{lineality space}
\[
    \mathcal L= C(Y)^* \cap (-C(Y)^*) = \text{span}(\{A_h \mid F_h = C(Y)\})
\]
of $C^*(Y)$.
In this case, extremal rays of $C(Y)^*$ are to be considered in $C(Y)^* / \mathcal L$.\\

We want to understand the incidence structure of extremal rays and facets of $C(Y)$. Each such ray of $C(Y)$ is contained in at least $D-1$ facets. Thus, we seek to understand which subsets of $D-1$ faces $F_h$ of $C(Y)$ intersect in a single point. Equivalently, we want to understand which subsets of $D-1$ rays $r_h$ of $C^*(Y)$ are contained in a common face. Let $H \subseteq [D+N]$ such that $|H|=D-1$, and denote by $A_H(Y)$ the submatrix of $A(Y)$ with rows indexed by $H$. If $\{r_h \mid h \in H\}$ lie on a common face, then these rays all lie on the hyperplane given by the rowspan of $A_H(Y)$. Additionally, let $h' \in [D+N]\setminus H$. Then all $r_{h'}$ lie on the same side of the hyperplane. This implies that the sign of $\det(A_{H \cup h'})$ is uniquely determined for all $h' \in [D+N]\setminus H$. 
The collection $\mathcal{R}$ of all regions in which the maximal minors of $A(Y)$ satisfy a certain sign pattern is known as the \emph{oriented matroid stratification} of $\R^M$. Each cell gives rise to a fixed sign pattern of $A(Y)$, i.e.\ the underlying oriented matroid. Restricting the oriented matroid strata to the correlated equilibrium space $\CES$ yields a subdivision of $\CES$ in which distinct combinatorial types lie in distinct regions.

\begin{definition}
    Let $S \subseteq \R^M$ be a semialgebraic set. 
    The \emph{algebraic boundary} $\partial_a S$ is the Zariski closure of the topological (euclidean) boundary 
    $\partial S$, i.e.\ the smallest algebraic variety containing $\partial S$ (over $\mathbb C$).
\end{definition}

\begin{construction}[Algebraic Boundary]
    For every $H \subseteq [D+N], |H|=D$ the minor $\det(A_H)$ is a polynomial in variables $Y$ of degree at most $D$, and $\text{sgn}(\det(A_H)) \in \{-1, 0, 1\}$ is a polynomial inequality.
   Let $s = (s_H \mid H \subseteq \binom{[D+N]}{D} ), s_H \in \{1,-1\}$ be a sign vector. Each maximal open region in the oriented matroid stratification is of the form
		\[
			\mathcal R^\circ_s =  \left\{Y \in \R^M \mid \text{sgn}(\det(A_H(Y))) =s_H \text{ for all } H \in \binom{[D+N]}{D} \right\}.
		\]
		Let $\mathcal R_s$ denote the Euclidean closure of such an open maximal region, which is a closed basic semialgebraic set.
		We define the \emph{algebraic boundary of the oriented matroid stratification} $\partial_a \mathcal R$ as the union of the algebraic boundaries of all such closed regions, i.e.
  	\[
		\partial_a \mathcal{R} = \bigcup_{s} \partial_a \mathcal R_s =   \bigcup_{H \in \binom{[D+N]}{D}} \mathcal V(\det(A_H(Y))),
		\]
  where the first union ranges over all sign vectors $s$ such that $\mathcal R^\circ_s$ is maximal, and
		$$\mathcal V(\det(A_H(Y))) = \{Y \in R^M \mid \det(A_H(Y)) = 0  \}.$$
		In the second union
   we only consider $H \in \binom{[D+N]}{D}$ such that $A_H(Y)$ contains at least one variable. Recall that $A(Y) = \sma U \\  Id_N \strix$ where the matrix $U$ has no zero rows and all nonzero entries are variables. Thus, 
   the only minor of $A(Y)$ that does not contain a variable is the determinant of $Id_N$, so the number of such minors is $\binom{D+N}{D}-1$.
   We note that the minors may not be irreducible polynomials, so they do not necessarily define the irreducible components of the variety $\partial_a \mathcal{R}$. 
   In total, this defines $\binom{D+N}{D}-1$ hypersurfaces, whose defining polynomials are of degree at most $D$.
\end{construction}

Applying \cite{basu2019betti} and \cite[Theorem 4]{basu2003different} to this setup yields a general bound on the number of strata on the oriented matroid stratification.
\begin{proposition}\label{prop:terrible-bound}
    Let $\delta$ be the maximum degree of all $\beta \leq \binom{D+N}{D}-1$ defining polynomials. 
    Then $\delta \leq D$ and the number of strata in the oriented matroid stratification is at most
    \[
        \delta (2 \delta-1)^{M-1}(1 + 3 \beta  ).
    \]
\end{proposition}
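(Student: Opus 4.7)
The strategy is to reduce the statement to a direct application of the quantitative real algebraic geometry bounds cited from \cite{basu2003different, basu2019betti}. The proof has two essentially independent parts: establishing the degree bound $\delta \leq D$ from the shape of $A(Y)$, and then invoking a known sign-condition count.

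For the degree bound, I would argue as follows. Each entry of the matrix $A(Y) = \sma U \\ Id_D \strix$ is either a constant (from $Id_D$, or the zeros in $U$) or a single indeterminate $Y^{(i)}_{j_1\cdots\hat{j_i}\cdots j_n}(k,l)$ (up to sign), as is visible from the explicit description of $U^{(i)}_{kl}$ in \Cref{sec:correlated-cone}. In particular, every entry of $A(Y)$ is a polynomial of degree at most $1$ in $Y$. For any $H \in \binom{[D+N]}{D}$, the Leibniz expansion of $\det(A_H(Y))$ is a sum of products of $D$ such entries, hence a polynomial of degree at most $D$. Therefore $\delta \leq D$, as claimed.

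For the counting bound, recall from the construction of the algebraic boundary that the maximal open regions of the oriented matroid strata are exactly the nonempty cells cut out by fixing, for each $H \in \binom{[D+N]}{D}$, a sign in $\{-1,+1\}$ for $\det(A_H(Y))$. Equivalently, each region is a realized sign condition of the family $\{\det(A_H(Y))\}_H$; the zero determinant of $Id_N$ contributes no information, leaving $\beta \leq \binom{D+N}{D}-1$ nontrivial defining polynomials in $M$ real variables, each of degree at most $\delta$. The number of regions is thus bounded above by the total number of realizable sign conditions of these $\beta$ polynomials on $\R^M$. By \cite[Theorem 4]{basu2003different}, combined with the refined Betti-number estimates of \cite{basu2019betti} applied with variety equal to $\R^M$, the sum of the Betti numbers (and in particular the number $b_0$ of connected components, which upper-bounds the number of realizable sign conditions) is at most $\delta(2\delta-1)^{M-1}(1+3\beta)$. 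This yields the desired inequality.

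The main conceptual obstacle is not the derivation itself but recognizing that the formulation of \cite{basu2003different, basu2019betti} applies cleanly in the present setting: one has to match the parameters (number of polynomials, degree, ambient dimension) and confirm that one is counting the right object (sign conditions, equivalently connected components of the complement of the discriminantal arrangement $\bigcup_H \mathcal V(\det(A_H(Y)))$). I would note explicitly in the proof that this bound is expected to be far from sharp, both because the maximal minors of $A(Y)$ satisfy many algebraic dependencies (so not all sign patterns are realizable) and because one ultimately intersects with the linear subspace $\CES \subseteq \R^M$, which can only decrease the count; refining the bound would require exploiting these dependencies rather than treating the defining polynomials as generic.
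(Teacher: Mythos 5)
Your proposal is correct and matches the paper's approach: the paper itself only offers the one-line justification of applying \cite{basu2019betti} and \cite[Theorem 4]{basu2003different} to the family of maximal minors, and your write-up supplies exactly the two ingredients that citation presupposes (the degree bound $\delta \le D$ via the Leibniz expansion of a $D\times D$ minor with entries of degree at most one, and the identification of the strata with realizable strict sign conditions of the $\beta$ defining polynomials in $M$ variables). Nothing further is needed.
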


The following three examples illustrate this construction for small games.

\begin{example}[$\partial_a \mathcal{R}$ for $(2\times 2)$-games]\label{ex:2x2strata}
In a $(2\times 2)$-game we have $D = 4$, $N = 4$ and $M = 4$. The number of irreducible components of $\partial_a \mathcal{R}$ is $\beta = 4$, and these are the $4$ coordinate hyperplanes. Indeed, the classification in \cite{calvoarmengol03_setcorrelatedequilibria} shows that in each open orthant, the combinatorial type is fixed, and in the two orthants described in \Cref{ex:full-dim-2x2} there is a unique combinatorial type of maximal dimension. The file \texttt{orientedMatroidStrata2x2.m2} in \cite{mathrepo} contains \texttt{Macaulay2} code \cite{M2} which explicitly computes these irreducible components. 
\end{example}

\begin{example}[$\partial_a \mathcal{R}$ for $(2\times 3)$-games]
\label{ex:2x3strata}
    In a $(2 \times 3)$-game, we have $D = 6, N = 8$ and $M = 9$.
    The number of minors of $A_H(Y)$ that contain at least one variable is $\binom{12}{6}-1 = 934$, but the number of irreducible components is only $\beta = 12$. All of these polynomials are homogeneous, and the maximum degree is $\delta = 2$. 
    In fact, the irreducible components are the $9$ coordinate hyperplanes, together with the $3$ binomials
    \begin{align*}
        Y^{(2)}_{2}(1, 2) \ Y^{(2)}_{1}(1, 3) - Y^{(2)}_{1}(1, 2) \ Y^{(2)}_{2}(1, 3) \\
        Y^{(2)}_{2}(1, 2) \ Y^{(2)}_{1}(2, 3) - Y^{(2)}_{1}(1, 2) \ Y^{(2)}_{2}(2, 3) \\
        Y^{(2)}_{2}(1, 3) \ Y^{(2)}_{1}(2, 3) - Y^{(2)}_{1}(1, 3)  \ Y^{(2)}_{2}(2, 3).
    \end{align*}
    
    Thus, \Cref{prop:terrible-bound} implies that the number of strata in the oriented matroid stratification is at most
    $
        2 \cdot 3^{8}(1 + 3 \cdot 12) = 485514.
    $
    However, as we will show in \Cref{thm:2x3typesFromStrata}, it turns out that there are precisely $3$ distinct combinatorial types.
    Note that the three binomials above are precisely the binomials intersecting the orthants in \Cref{ex:full-dim-2x3}. The file \texttt{orientedMatroidStrata2x3.m2} in \cite{mathrepo} contains \texttt{Macaulay2} code which explicitly computes these polynomials. 
\end{example}

\begin{example}[$\partial_a \mathcal{R}$ for $(2\times 2 \times 2)$-games]
    In a $(2 \times 2 \times 2)$-game, we have $D = 8, N = 6$ and $M = 12$.
    The number of minors of $A_H(Y)$ that contain at least one variable is $\binom{14}{8}-1 = 3003$, but the number of irreducible components is only $194$. All of these polynomials are homogeneous, and the maximum degree is $6$. The file \texttt{orientedMatroidStrata2x2x2.m2} in \cite{mathrepo} contains \texttt{Macaulay2} code which explicitly computes these polynomials. 
    \Cref{prop:terrible-bound} implies that the number of strata of the oriented matroid stratification is at most
    \[
        6 \cdot 11^{11} (1 + 3 \cdot 194) = 998 \ 020 \ 223 \ 797 \ 278 > 10^{14}.
    \]
\end{example}

The previous examples illustrate that the algebraic boundary of the oriented matroid stratification is quite nice for $(2\times n)$-games but becomes significantly more complicated even for $(2\times 2 \times 2)$-games. The following theorem shows that the nice structure we see for $(2\times 3)$-games holds for all $(2 \times n)$-games. 

\begin{theorem}\label{th:binomials}
    Consider a $(2 \times n)$-game, i.e.\ a $2$-player game with $d_1 = 2, d_2 = n \in \N$. Then the irreducible components of $\partial_a \mathcal{R}$ are given by
    \begin{itemize}
        \item the coordinate hyperplanes,
        \item hypersurfaces defined by quadratic binomials that are given by certain $(2 \times 2)$-minors
        and $(4\times 4)$-minors
        of the matrix $A(Y)$.
    \end{itemize}
\end{theorem}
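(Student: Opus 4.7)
The plan is to exploit the extreme sparsity of the matrix $A(Y)$ for $(2\times n)$-games. First I would reduce the question to computing minors of $U$: any $D \times D$ maximal minor of $A(Y)$ specified by a row set $H$ that contains $D-r$ identity rows $e_{c_1}, \ldots, e_{c_{D-r}}$ equals, up to sign, $\det U_{R, C'}$ via Laplace expansion along those identity rows, where $R$ is the set of $U$-rows in $H$ and $C' = [D] \setminus \{c_1, \ldots, c_{D-r}\}$. Hence it suffices to analyze arbitrary square minors of $U$.

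Next I would organize the analysis around a block decomposition of $U$. Partition the $D = 2n$ columns into $n$ blocks, where block $k$ consists of the columns indexed by $(1,k)$ and $(2,k)$. The rows of $U$ split into two types: two \emph{global} rows $U^{(1)}_{12}, U^{(1)}_{21}$, each supported on the top (respectively bottom) column of every block and carrying the variables $\pm Y^{(1)}_k(1,2)$; and $n(n-1)$ \emph{local} rows $U^{(2)}_{kl}$, each supported on a single block $k$ and carrying (up to signs) the variables $Y^{(2)}_{1}(k,l)$ and $Y^{(2)}_{2}(k,l)$ in that block. For the chosen $R,C'$, set $R_{c,k} := R \cap \{U^{(2)}_{kl} : l \neq k\}$ and $C'_k := C' \cap \{(1,k),(2,k)\}$. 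A Hall-type matching argument applied to the Leibniz expansion shows that $\det U_{R, C'} = 0$ unless $|R_{c,k}| \leq |C'_k|$ and $|R_{c,k}| \leq 2$ for every $k \in [n]$.

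The main computation is then to show that $\det U_{R, C'}$ factors as a monomial in the coordinate variables times a product of quadratic $(2\times 2)$-binomials. My approach is to Laplace-expand along the global rows in $R$. In most configurations, each global row is forced to match a unique column slot in a specific block, namely one where the local rows do not saturate both columns of the block, pulling out a factor of $Y^{(1)}_k(1,2)$. Within each block, the remaining submatrix is at most $2 \times 2$: a $1 \times 1$ submatrix contributes a single coordinate variable $Y^{(2)}_i(k,l)$, while a $2 \times 2$ submatrix produces the quadratic binomial $Y^{(2)}_1(k, l_1)Y^{(2)}_2(k, l_2) - Y^{(2)}_2(k, l_1)Y^{(2)}_1(k, l_2)$, which is precisely the $(2\times 2)$-minor of the submatrix of $A(Y)$ with rows $U^{(2)}_{k,l_1}, U^{(2)}_{k,l_2}$ and columns $(1,k), (2,k)$.

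The principal obstacle arises when both global rows are present in $R$ and the slack columns lie in different blocks, so that several distinct matchings contribute to the Leibniz expansion. The hard part will be to show that these contributions combine cleanly into a single monomial times one quadratic $(2\times 2)$-binomial, by tracking permutation signs carefully across the block-by-block decomposition. Once this case analysis is in place, every $D \times D$ minor of $A(Y)$ factors into coordinate variables and the specific quadratic $(2\times 2)$-minors of $A(Y)$ described above; since each of the latter is irreducible, the irreducible components of $\partial_a \mathcal{R}$ are exactly the coordinate hyperplanes and the binomial hypersurfaces asserted in the theorem.
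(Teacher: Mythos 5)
Your overall strategy---reducing maximal minors of $A(Y)$ to minors of $U$ via Laplace expansion along the chosen identity rows, splitting the rows of $U$ into the two global rows and the block-local rows, and analyzing the contributing matchings block by block---is sound and genuinely different from the paper's argument, which instead proceeds by induction on $n$, building $A^{n+1}(Y)$ from $A^{n}(Y)$ and reducing submatrices to relabelled submatrices of $A^{n}(Y)$. Your reduction to $U$, the Hall-type vanishing criterion, and the cases with at most one global row (or with both global rows but a single slack block) are correct as you describe them, and in those cases the block-by-block factorization into coordinate variables and within-block $(2\times 2)$-minors does come out as you predict.

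The gap is exactly the case you defer, and your predicted outcome there is not accurate. When both global rows lie in $R$ and the two slack columns sit in different blocks $k \neq k'$, the two contributing matchings do combine into a monomial times a single quadratic binomial, but that binomial is the cross-block expression $Y^{(2)}_1(k,l)\,Y^{(2)}_2(k',l') - Y^{(2)}_2(k,l)\,Y^{(2)}_1(k',l')$, built from two local rows supported in \emph{different} blocks. Using $Y^{(2)}_j(l,k) = -Y^{(2)}_j(k,l)$, this agrees up to sign with one of your within-block minors only when the pairs $\{k,l\}$ and $\{k',l'\}$ share a strategy of player $2$, which is automatic for $n \leq 3$ but fails for $n \geq 4$. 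Concretely, for $n=4$ take $H$ to consist of $U^{(1)}_{12}, U^{(1)}_{21}, U^{(2)}_{13}, U^{(2)}_{24}$ together with the four identity rows for the columns of blocks $3$ and $4$; the corresponding maximal minor equals
\[
\pm\, Y^{(1)}_1(1,2)\, Y^{(1)}_2(1,2)\,\bigl(Y^{(2)}_1(1,3)\,Y^{(2)}_2(2,4) - Y^{(2)}_2(1,3)\,Y^{(2)}_1(2,4)\bigr),
\]
and the quadratic factor is irreducible but is not a $(2\times 2)$-minor of $A(Y)$: the only rows of $A(Y)$ containing these variables are $U^{(2)}_{13}, U^{(2)}_{31}, U^{(2)}_{24}, U^{(2)}_{42}$, and any pair consisting of one row from each group has disjoint column support, so their $(2\times 2)$-minors are monomials. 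Hence your concluding claim that every maximal minor factors into coordinate variables and the within-block minors you listed is false as stated for $n \geq 4$; the hard case must actually be carried out, and the resulting list of binomial factors enlarged to these cross-block determinants (with the overlapping-pair ones identified with within-block minors via antisymmetry). Note that the paper's inductive proof glosses over the same point by treating such submatrices as submatrices of $A^{n}(Y)$ ``up to relabelling of variables,'' so working out this case carefully is precisely where the content of the theorem lies, and it is the step your proposal leaves open.
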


\begin{proof}
We prove by induction on $n$. For $n = 2$ after reordering the rows and columns, the matrix $A(Y) = A^2(Y)$ has the following representation:

\[
\left(
\begin{tabular}{|rr|rr|} 
\hline
$Y^{(1)}_{1}(1,2)$      &                     & $Y^{(1)}_{2}(1,2)$ &                       \\
                        & $-Y^{(1)}_{1}(1,2)$ &                  & $-Y^{(1)}_{2}(1,2)$     \\ 
\hline
$Y^{(2)}_{1}(1,2)$      &  $Y^{(2)}_{2}(1,2)$ &                  & \multicolumn{1}{l}{}    \\
\cline{1-4}
\multicolumn{1}{l}{}    &                   & $Y^{(2)}_{1}(2,1)$ & $Y^{(2)}_{1}(2,1)$      \\
\hline 
\multicolumn{4}{|c|}{\begin{tabular}[c]{@{}c@{}}\\$Id_4$\\ \\  \end{tabular}}      \\
\hline
\end{tabular}
\right)
\]

For $n=3$ the columns and rows can be arranged to $A(Y) = A^3(Y)$:

\[\left(
\begin{tabular}{|rr|rr|rr|} 
\hline
$Y^{(1)}_{1}(1,2)$   &                      & $Y^{(1)}_{2}(1,2)$ &                      & $Y^{(1)}_{3}(1,2)$ &                       \\
                     & $- Y^{(1)}_{1}(1,2)$ &                    & $- Y^{(1)}_{2}(1,2)$ &                    & $-Y^{(1)}_{3}(1,2)$    \\ 
\hline
$Y^{(2)}_{1}(1,2)$   & $Y^{(2)}_{2}(1,2)$   &                    & \multicolumn{1}{l}{} &                    & \multicolumn{1}{l}{}  \\
$Y^{(2)}_{1}(1,3)$   & $Y^{(2)}_{2}(1,3)$   &                    & \multicolumn{1}{l}{} &                    & \multicolumn{1}{l}{}  \\ 
\cline{1-4}
\multicolumn{1}{l}{} &                      & $Y^{(2)}_{1}(2,1)$ & $Y^{(2)}_{2}(2,1)$   &                    & \multicolumn{1}{l}{}  \\
\multicolumn{1}{l}{} &                      & $Y^{(2)}_{1}(2,3)$ & $Y^{(2)}_{2}(2,3)$   &                    & \multicolumn{1}{l}{}  \\ 
\cline{3-6}
\multicolumn{1}{l}{} & \multicolumn{1}{l}{} &                    &                      & $Y^{(2)}_{1}(3,1)$ & $Y^{(2)}_{2}(3,1)$    \\
\multicolumn{1}{l}{} & \multicolumn{1}{l}{} &                    &                      & $Y^{(2)}_{1}(3,2)$ & $Y^{(2)}_{2}(3,2)$    \\ 
\hline
\multicolumn{6}{|c|}{\begin{tabular}[c]{@{}c@{}}\\$Id_6$\\ \\  \end{tabular}}                                                               \\ 
\hline
\end{tabular}.
\right)\]

In both cases, the statement holds by \Cref{ex:2x2strata,ex:2x3strata}. First, we describe the general block matrix structure of $A(Y) = A^n(Y)$ for fixed $n \in \N$. Recall that $A^n(Y)$ is a $((D+N) \times D)$-matrix, where $D = 2n$ and $N = 2+n(n-1)$.
As in the case $n=2,3$, we can arrange the rows and columns of $A^n(Y)$ such that the first two rows consist of $n$ blocks of size $(2\times 2)$, which are of the form $\left[ \begin{smallmatrix}
     Y^{(1)}_{k}(1,2) &  \\  & -Y^{(1)}_{k}(1,2) \end{smallmatrix}
\right]$ for $k \in [n]$.
The following $n(n-1)$ rows form a block diagonal matrix, with $n$ blocks of size $((n-1) \times 2)$ of the form
\[
  b_k =  \left[ \begin{smallmatrix}
        Y^{(2)}_1(k,1) & Y^{(2)}_2(k,1) \\
        \vdots & \vdots \\[5pt]
        Y^{(2)}_1(k,n) & Y^{(2)}_2(k,n)
    \end{smallmatrix} \right]
\]
for each $k \in [n]$, where the row $[Y^{(2)}_1(k,k)  \  Y^{(2)}_1(k,k)]$ is omitted. Finally, the last $2n$ rows consist of the identity matrix $Id_{2n}$. 

We want to show that the maximal minors of the matrix $A^{n}(Y)$ are either monomials, binomials, or zero. Note that every maximal minor of $A^{n}(Y)$ that involves a row from the identity matrix corresponds to a smaller minor of the remaining matrix. We thus consider all $(m \times m)$-minors of the submatrix of $A^{n}(Y)$ that consists of the first $2+(n-1)n$ rows, for $m \leq 2n$. Let $M$ be such a $(m \times m)$-submatrix.   

If $M$ contains a row or column which has at most one entry, then computing $\det(M)$ by expanding this row or column implies the statement by induction on $m$. Thus, suppose that $M$ has at least $2$ nonzero entries in each row and column. If $M$ contains at least $3$ rows from one block of the block diagonal matrix, then these $3$ rows are linearly dependent (since each block only consists of $2$ columns), in which case $\det(M) = 0$.
On the other hand, if $M$ contains at most $2$ rows from each block, and neither the first nor the second row, then $M$ either contains a zero row, a zero column, or is a block-diagonal matrix with blocks of sizes $(2 \times 2)$ and $(1 \times 1)$, so $\det(M)$ is a product of binomials and monomials.

We are left with the case in which each row and column contains at least $2$ non-zero entries, and $M$ contains the first or second row. We first argue that $M$ must contain both the first and second row to meet these requirements.
Let $M_{*j}$ be a column of $M$. Since $M_{*j}$ contains at least two nonzero entries, at least one entry $M_{ij}$ is neither in the first nor second row of $A^n(Y)$. Thus, there exists a block $b_k, k \in [n]$ containing $M_{ij}$. Recall that the columns of $A^n(Y)$ containing the block $b_k$ are the columns with index $2k-1$ and $2k$. Since each row of $M$ contains at least two nonzero entries, $M$ must contain both of these columns. Following this argument for all columns of $M$ implies that $m$ is even. 

If only one of the first two rows is contained in $M$, then by the Pigeonhole principle there is at least one block such that $M$ has an odd number of rows from this block. As we have already covered the case in which $M$ has at least three rows from one block, we can assume there is a block $b_k$ of which $M$ contains precisely one row $M_{i*}$ with precisely two nonzero entries $M_{ij}$ and $M_{i,j+1}$. However, then either the column $M_{*j}$ or $M_{*j+1}$ contain only a single nonzero entry, so $M$ does not meet our assumptions. Thus, we can assume that $M$ contains both the first and second row of $A^n(Y)$, and $m-2$ rows from blocks of the block diagonal. 

We have seen that the assumption of having at least $2$ nonzero entries in each row implies that $M$ contains either $0$ or $2$ columns from each block $b_k$. The matrix $M$ thus contains $m-2$ rows from $\frac{m}{2}$ blocks in total, with at least $1$ and at most $2$ rows per block. This implies that there are precisely $2$ blocks which contribute a single row, while $M$ contains $2$ rows of the remaining $\frac{m}{2} -2$ blocks.

To summarize, the matrix $M$ is of the shape
\[ \left( 
\begin{tabular}{lllllllll}
\cline{1-6} \cline{8-9}
\multicolumn{1}{|l}{$\ast$} & \multicolumn{1}{l|}{}  & $\ast$ & \multicolumn{1}{l|}{}  & $\ast$ & \multicolumn{1}{l|}{}  & \multicolumn{1}{l|}{\multirow{2}{*}{\dots}} & $\ast$ & \multicolumn{1}{l|}{}  \\
\multicolumn{1}{|l}{}  & \multicolumn{1}{l|}{$\ast$} &   & \multicolumn{1}{l|}{$\ast$} &   & \multicolumn{1}{l|}{$\ast$} & \multicolumn{1}{l|}{}                     &   & \multicolumn{1}{l|}{$\ast$} \\ \cline{1-6} \cline{8-9} 
\multicolumn{1}{|l}{$\ast$} & \multicolumn{1}{l|}{$\ast$} &   &                        &   &                        &                                           &   &                        \\ \cline{1-4}
                       & \multicolumn{1}{l|}{}  & $\ast$ & \multicolumn{1}{l|}{$\ast$} &   &                        &                                           &   &                        \\ \cline{3-6}
                       &                        &   & \multicolumn{1}{l|}{}  & $\ast$ & \multicolumn{1}{l|}{$\ast$} &                                           &   &                        \\
                       &                        &   & \multicolumn{1}{l|}{}  & $\ast$ & \multicolumn{1}{l|}{$\ast$} &                                           &   &                        \\ \cline{5-6}
                       &                        &   &                        &   &                        & \rotatebox{-45}{\dots}                                       &   &                        \\ \cline{8-9} 
                       &                        &   &                        &   &                        & \multicolumn{1}{l|}{}                     & $\ast$ & \multicolumn{1}{l|}{$\ast$} \\
                       &                        &   &                        &   &                        & \multicolumn{1}{l|}{}                     & $\ast$ & \multicolumn{1}{l|}{$\ast$} \\ \cline{8-9} 
\end{tabular}
\right),
\] 
\noindent where $\ast$ denotes a non-zero entry, i.e. below the first two rows $M$ is a block matrix with $2$ blocks of size $(1 \times 2)$ and $\frac{m}{2} -2$ blocks of size $(2\times 2)$.
We now proceed by induction on the (even) size $m$ of the matrix $M$. If $m=4$ then $M$ does not contain any $(2 \times 2)$-block and it can be computed that $\det(M)$ is the difference of two monomials of degree $3$. For general even $m$, note that we can consider $M$ as a block matrix of the form $M = \sma B & C \\ 0 & D \strix$, where $B$ is a matrix of size $(m-2)\times(m-2)$ and $D$ a $(2 \times 2)$-block. Recall that for such upper triangular block matrices holds $\det(M) = \det(B) \det(D)$. By induction, $\det(B)$ is a product of monomials and binomials, and $\det(D)$ is a binomial since $D$ is a $(2 \times 2)$-block. 
Throughout the above arguments, the binomials in the product always arose as determinants of $(2 \times 2)$-submatrices of a block or from $(4 \times 4)$-submatrices involving two columns of even index, two columns of odd index, the first two rows and two rows from distinct blocks.
Finally, the statement follows.
\end{proof}

We now show how the oriented matroid stratification of $(2 \times 3)$-games can be used to completely determine the possible combinatorial types of the polytope for payoffs $Y$ which are generic with respect to the algebraic boundary described in \Cref{th:binomials}. In other words, none of the polynomials described in \Cref{th:binomials} vanish on $Y$. 

\begin{theorem}
\label{thm:2x3typesFromStrata}
Let $G$ be a $(2 \times 3)$-game with generic payoffs $Y \in \CES \subseteq \R^9$ and let $P_G = P(Y)$ be its associated correlated equilibrium polytope. Then one of the following holds
\begin{itemize}
    \item $P_G$ is a point,
    \item $P_G$ is of maximal dimensional and of a unique combinatorial type,
    \item There exists a $(2 \times 2)$-game $G'$ such that $P_{G'}$ is full-dimensional and combinatorially equivalent to $P_G$. 
\end{itemize}
\end{theorem}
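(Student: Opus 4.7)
The plan is to leverage the oriented matroid stratification of Section~\ref{sec:comb-types}. By Theorem~\ref{th:binomials}, together with the explicit description in Example~\ref{ex:2x3strata}, the algebraic boundary $\partial_a\mathcal{R}$ for $(2\times 3)$-games is the union of the $9$ coordinate hyperplanes of $\R^9$ and the $3$ explicit quadratic binomials arising as $(2\times 2)$-minors of the $Y^{(2)}$-block of $A(Y)$. Since the combinatorial type of the associated cone $C(Y)$ (and hence of $P_G$) is constant on each open stratum, and there are only finitely many strata, it suffices to verify the trichotomy on one representative from each full-dimensional cell of the induced stratification of $\CES$.

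First I would enumerate these strata inside $\CES$: intersect the sign arrangement of the $9$ coordinate hyperplanes and the $3$ binomials with the $7$-dimensional linear space $\CES\subseteq\R^9$ cut out by the two relations among the $Y^{(2)}_k(\cdot,\cdot)$, and discard all empty cells. Each remaining cell admits a rational interior point, and for each such point one computes the vertices and facets of $P_G$ via \texttt{polymake} or a \texttt{Macaulay2} routine. The enumeration simplifies drastically after quotienting by the $S_3$-action permuting Player~$2$'s strategies, which preserves both the stratification and the combinatorial type of~$P_G$.

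Given this finite list, the three-way classification is then checked orbit-by-orbit. The full-dimensional strata are precisely the intersections with the semialgebraic set $\mathcal D$ described in Example~\ref{ex:full-dim-2x3}; I would verify that every such cell produces a polytope with the same $f$-vector and matching face lattice, giving the second bullet. For a stratum outside $\mathcal D$, I would show that either every inequality in~\eqref{eq: correlated equilibria} is tight, forcing $P_G$ to be a single point, or there exists an index $j\in[3]$ for which $p_{1j}=p_{2j}=0$ holds on all of $P_G$. In the latter case, deleting the $j$-th column of both payoff matrices yields a $(2\times 2)$-game $G'$ whose cone $C_{G'}$ is obtained from $C_G$ by removing the two redundant coordinates, so $P_{G'}\cong P_G$ as polytopes; full-dimensionality of $P_{G'}$ then follows from the $(2\times 2)$ classification recalled in Example~\ref{ex:2x2strata}.

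The main obstacle is converting the sign data of a non-full-dimensional stratum into a concrete \emph{strategy elimination} statement. Concretely, I must argue that every combination of signs of the three binomials with the coordinate signs of $Y$ lying outside $\mathcal D$ either renders one of Player~$2$'s strategies conditionally dominated in a way that forces $p_{\cdot j}=0$, or over-constrains the system to a single point. Once the correct index $j$ is identified, the combinatorial equivalence $P_G\cong P_{G'}$ reduces to inspecting the block structure of $A(Y)$ exhibited in the proof of Theorem~\ref{th:binomials}: the rows of $A(Y)$ that survive after imposing $p_{1j}=p_{2j}=0$ are precisely the rows defining the matrix $A(Y')$ of the reduced game. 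Hence the core technical work is this sign-case analysis, while the stratum enumeration and the face-lattice comparisons are routine computations supported by the code at~\cite{mathrepo}.
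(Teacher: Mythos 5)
Your proposal follows essentially the same route as the paper: both reduce the problem to the oriented matroid strata of $A(Y)$, use \Cref{th:binomials} together with \Cref{ex:2x3strata} to conclude that the signs of the $9$ coordinates and the $3$ binomials determine the signs of all nonzero maximal minors, and then compute the combinatorial type at one representative point per realizable sign pattern in $\CES$ (your $S_3$-quotient is a harmless efficiency gain). The one substantive divergence is how you treat the third bullet. You frame it as requiring a sign-case analysis showing that on every non-full-dimensional stratum outside $\mathcal D$ either all inequalities \eqref{eq: correlated equilibria} are tight or $p_{1j}=p_{2j}=0$ for some $j$, so that $G'$ arises from $G$ by deleting one of Player~$2$'s strategies; you explicitly flag this as the main obstacle and leave it open. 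That step is not actually needed for the theorem as stated, which only asserts the \emph{existence} of some full-dimensional $(2\times 2)$-game $G'$ with $P_{G'}$ combinatorially equivalent to $P_G$. Since the enumeration returns exactly three types --- a point, the bipyramid over a triangle, and one $5$-dimensional type --- and the bipyramid is the unique full-dimensional type of $(2\times 2)$-games (\Cref{ex:2x2strata}), the third bullet is already delivered by the computation. Your stronger structural claim, that $G'$ can be taken to be a reduction of $G$ itself, is in the spirit of dual reduction and of \Cref{conj:2xntypes}, but as written it is an unproved strengthening rather than a required ingredient. Finally, note that both your argument and the paper's consider only the open strata where all twelve irreducible components are nonzero, so degenerate payoffs on the boundary are covered by neither.
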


\begin{figure}[t]
    \centering
    \includegraphics[scale=0.6]{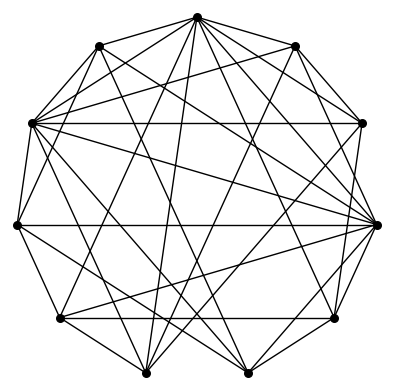}
    \caption{The graph of the combinatorially unique $5$-dimensional polytope that arises as the correlated equilibrium polytope of a $(2\times 3)$-game, as described in \Cref{thm:2x3typesFromStrata}.}
    \label{fig:2x3-graph}
\end{figure}

\begin{proof}

First recall that the combinatorial type is fully determined by the sign patterns of the maximal minors of $A(Y)$, i.e.\ the oriented matroid of $A(Y)$. 
For $(2 \times 3)$-games the matrix $A(Y)$ has 1206 nonzero maximal minors for generic $Y$, since $1797$ of the $\binom{14}{6} = 3003$ maximal minors are identically zero. 
The signs of these 1206 maximal minors are completely determined by the signs of the irreducible components of $\det(A(Y)_B)$ for each $B \in \binom{[14]}{6}$. As in \Cref{th:binomials} and \Cref{ex:2x3strata}, there are 12 such irreducible components $f_1, \ldots, f_{12}$, which are given by the 9 coordinate hyperplanes and the 3 binomials listed in the example. 
This means that once we fix a sign pattern $s \in \{1, -1\}^{12}$ of the $f_i$ the signs of all maximal minors of $A(Y)$ are uniquely determined. Thus, to compute all possible combinatorial types, we compute the combinatorial type of the polytope once for each possible sign pattern $s$ of the $f_i(Y)$.

We do this computationally, using the software \texttt{Mathematica 13.0} and \texttt{SageMath 9.6} \cite{Mathematica, sagemath}. We first use \texttt{Mathematica} to find a payoff $Y \in \CES$ such that $s_i f_i(Y) > 0$ for all $i=1,\dots,12$. We then compute the corresponding combinatorial type of the polytope in \texttt{SageMath}. These computations can be found in the files $\texttt{combinatorialTypes2x3.nb}$ and $\texttt{combinatorialTypes2x3.ipynb}$ on MathRepo \cite{mathrepo} respectively. 

As a result, we obtain $3$ different possible combinatorial types, which are a single point, the unique combinatorial type of full-dimensions of $(2\times 2)$-games (a bipyramid over a triangle), and a new unique full-dimensional combinatorial type. 
This polytope has $f$-vector $ (1, 11, 32, 40, 25, 8, 1)$, and the graph of this $5$-dimensional polytope is depicted in \Cref{fig:2x3-graph}. 
A full description of this polytope can be found in $\texttt{combinatorialTypes2x3.ipynb}$ on our MathRepo page \cite{mathrepo}.
\end{proof}

\begin{example}[Combinatorial types of $(2 \times 3)$-games]\label{ex:2x3-full-comb-type}
    By \Cref{thm:2x3typesFromStrata}, for generic $(2 \times 3)$-games there is a unique combinatorial type of full-dimension. This is a $5$-dimensional polytope with $f$-vector $(1, 11, 32, 40, 25, 8, 1)$ and its graph is depicted in \Cref{fig:2x3-graph}.
\end{example}

\Cref{th:binomials} shows, that in $(2 \times 3)$-games all correlated equilibrium polytopes that are not of maximal dimension appear as the maximal polytope of a smaller game. This gives rise to the following conjecture.

\begin{conjecture}
\label{conj:2xntypes}
    Let $G$ be a $(2\times n)$-game with generic payoff matrices and let $P_G$ be its correlated equilibrium polytope. If $P_G$ is not of maximal dimension, then there exists a $(2\times k)$-game $G'$ where $k < n$ such that $P_{G'}$ is has maximal dimension and $P_G$ and $P_{G'}$ are combinatorially equivalent. 
\end{conjecture}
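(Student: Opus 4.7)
The plan is to prove the conjecture by induction on $n$, using the columnar structure of player 2's incentive constraints to reduce non-full-dimensional cases to smaller $(2\times m)$-games. The base case $n=2$ follows from \Cref{ex:full-dim-2x2} and \Cref{ex:2x2strata}: for generic payoffs, $P_G$ is either full-dimensional or a single point, the latter being the edge case that also appears as a separate alternative in \Cref{thm:2x3typesFromStrata}.

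The key observation underlying the inductive step is that for a $(2\times n)$-game player 2's constraints decompose columnwise. For each $k\in[n]$ set
\[
    D_k = \{(x,y)\in\mathbb{R}^{2}_{\geq 0} : Y^{(2)}_1(k,l)\,x + Y^{(2)}_2(k,l)\,y \geq 0 \text{ for all } l\neq k\},
\]
so that $C(Y)\subseteq D_1\times\cdots\times D_n$ and the only remaining cuts are the two player 1 inequalities $c_1 = \sum_k Y^{(1)}_k(1,2)\,p_{1k}\geq 0$ and $c_2 = -\sum_k Y^{(1)}_k(1,2)\,p_{2k}\geq 0$. By \Cref{th:binomials}, the quadratic binomial components of $\partial_a \mathcal{R}$ are exactly the conditions under which two defining rays of some $D_k$ coincide, so for generic $Y\in \CES$ every $D_k$ is either $2$-dimensional or equal to $\{0\}$; intermediate dimension is excluded on a Zariski-dense open subset.

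The argument then branches. If some $D_k=\{0\}$, then $p_{1k}=p_{2k}=0$ holds on all of $P_G$, and deleting strategy $k$ of player 2 (keeping $X^{(i)}_{j_1 j_2}$ for $j_2\neq k$) produces a $(2\times(n-1))$-game $G'$ with $P_{G'}$ affinely, and hence combinatorially, equivalent to $P_G$. If $P_{G'}$ is full-dimensional, $G'$ itself is the desired smaller game; otherwise the inductive hypothesis applied to $G'$ yields a $(2\times m)$-game with $m<n-1$ of the required form. If instead every $D_k$ is $2$-dimensional, then one shows $P_G$ is full-dimensional unless the vector $(Y^{(1)}_k(1,2))_{k\in[n]}$ has all entries of the same sign, that is, unless player 1 has a strictly dominant strategy; in that case a direct computation, generalizing \Cref{ex:full-dim-2x3}, shows $P_G$ collapses to a single point (the edge case noted above).

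The main technical obstacle is the structural claim in the second branch: ruling out implicit equalities in $C(Y)$ that mix player 1 and player 2 constraints without forcing $D_k=\{0\}$ or a dominant-strategy relation. The plan is to apply Farkas-type duality to describe non-full-dimensionality by nonzero non-negative row combinations $\nu^{\top} A(Y)=0$. Reading off the coordinates $(1,k)$ and $(2,k)$ for each $k$, such a combination yields the vector relation
\[
    \alpha_1 \bigl(Y^{(1)}_k(1,2),0\bigr) + \alpha_2 \bigl(0,-Y^{(1)}_k(1,2)\bigr) + \sum_{l\neq k}\beta^{(k)}_l \bigl(Y^{(2)}_1(k,l),Y^{(2)}_2(k,l)\bigr) \in -\mathbb{R}^{2}_{\geq 0},
\]
with $\alpha_i,\beta^{(k)}_l\geq 0$. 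A case analysis on whether $\beta^{(k)}$ is trivial, combined with the Zariski-genericity of $Y$, should show that the only surviving non-trivial solutions correspond to either $\alpha=0$ with some nonzero $\beta^{(k)}$ certifying $D_k=\{0\}$, or $\beta=0$ with player 1 having a dominant strategy. Completing this case analysis uniformly in $n$ is the central challenge and explains why the conjecture has so far only been verified computationally for $n\leq 5$ via \cite{mathrepo}.
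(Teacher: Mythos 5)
First, a point of calibration: the statement you are proving is \Cref{conj:2xntypes}, which the paper does not prove. It is stated as a conjecture, supported only by random sampling for $n\leq 5$; the only proved instance is the $(2\times 3)$-case, which is handled computationally in \Cref{thm:2x3typesFromStrata} by enumerating sign patterns of the $12$ irreducible components of $\partial_a\mathcal{R}$. So there is no proof in the paper to compare against, and your proposal should be judged on its own terms. On those terms it is a plan, not a proof: you yourself flag the second branch as ``the central challenge,'' and that flag is warranted. The assertion that, when every $D_k$ is two-dimensional, $P_G$ is full-dimensional unless all $Y^{(1)}_k(1,2)$ share a sign is exactly the converse of \Cref{rmk:full-dim-symmetry}, which the paper states only as a list of necessary conditions. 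Your own \Cref{ex:full-dim-2x3} already shows the converse cannot be purely sign-theoretic: the region $\mathcal{D}$ for $(2\times 3)$-games involves a genuine binomial inequality $Y^{(2)}_{2}(1,3)Y^{(2)}_{1}(2,3) < Y^{(2)}_{1}(1,3)Y^{(2)}_{2}(2,3)$ cutting through an orthant, so within a single generic orthant (all $D_k$ two-dimensional, mixed signs for player $1$) the dimension of $C(Y)$ can still vary. The tension is visible in your own setup: increasing $p_{1k}$ to make $c_1>0$ forces $p_{2k}$ up through the ray constraints of $D_k$, which works against $c_2>0$, and whether this can be resolved depends on the slopes of the $D_k$, not just on signs.

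There is also a concrete error in the first branch. If $D_k=\{0\}$, deleting strategy $k$ of player $2$ does \emph{not} give a game $G'$ with $P_{G'}$ affinely equivalent to $P_G$. Restricting $P_G$ to $\{p_{1k}=p_{2k}=0\}$ retains the constraints $Y^{(2)}_1(j,k)\,p_{1j}+Y^{(2)}_2(j,k)\,p_{2j}\geq 0$ for $j\neq k$ (the incentive not to deviate \emph{to} $k$), whereas these constraints are absent from $G'$. Hence $P_G\subseteq P_{G'}$ with possibly strict containment, and you would need to prove these dropped inequalities are redundant on $P_{G'}$ — which is essentially the subtlety the paper points to when it notes that under dual reduction ``any correlated equilibrium of the reduced game is a correlated equilibrium of the original game, however the opposite is not always true'' \cite[Section 5]{myerson-dualreduction}. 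Finally, even if both branches were repaired, the terminal case where $P_G$ is a single point does not fit the conjecture's conclusion (a point is not the maximal-dimensional polytope of any $(2\times k)$-game with $k\geq 2$), so the induction does not bottom out in the required form. The columnwise decomposition $C(Y)=(D_1\times\cdots\times D_n)\cap\{c_1\geq 0\}\cap\{c_2\geq 0\}$ and the generic dichotomy $\dim D_k\in\{0,2\}$ are correct and are a reasonable skeleton for attacking the conjecture, but as written the argument does not close.
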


\begin{table}
    \centering
\begin{tabular}{ |p{2cm}||p{1cm}|p{1cm}|p{1cm}|p{1cm}|p{1cm}|}
\hline
\multicolumn{6}{|c|}{Unique Combinatorial Types by Dimension} \\ 
\hline
Dimension & 0 & 3 & 5 & 7 & 9\\
\hline
\hline
$(2 \times 2)$ & 1 & 1 & 0 & 0 & 0 \\ 
$(2 \times 3)$ & 1 & 1 & 1 & 0 & 0 \\
$(2 \times 4)$  & 1 & 1 & 1 & 3 & 0 \\
$(2 \times 5)$ & 1 & 1 & 1 & 3 & 4  \\
\hline
\end{tabular}
\caption{The number of unique combinatorial types of $P_G$ of each dimension for a $(2 \times n)$-game in a random sampling of size $100 \ 000$.}
\label{table:typesOf2xn}
\end{table}

A relevant study to this conjecture is the dual reduction process of finite games. An iterative dual reduction reduces a finite game $G$ to a lower-dimensional elementary game $G'$, for which $P_{G'}$ is full-dimensional, by deleting certain pure strategies or merging several pure strategies into a single one. Any correlated equilibrium of the reduced game $G'$ is a correlated equilibrium of the original game $G$, however, the opposite is not always true \cite[Section 5]{myerson-dualreduction}. 
\Cref{conj:2xntypes} is supported by our computations thus far. To test this conjecture, we sampled $100 \ 000$ random payoff matrices $X$ for $(2\times n)$-games for $n = 4,5$. The results are summarized in \Cref{table:typesOf2xn}, which shows the number of unique combinatorial types of a given dimension that we found for each $(2\times n)$-game. In all of our computations, \Cref{conj:2xntypes} holds. The supporting code can be found on \cite{mathrepo}. 

In contrast to the $(2\times n)$-case, $(2\times 2 \times 2)$-games exhibit a much wider variety of distinct combinatorial types. In a sample of $100 \ 000$ random payoff matrices for $(2 \times 2 \times 2)$-games, we found $14 \ 949$ distinct combinatorial types which are of maximal dimension. Amongst these $7$-dimensional polytopes, the number of vertices can range from $8$ to $119$, the number of facets from $8$ to $14$, and the number of total faces from $256$ to $2338$. 
Examples of $f$-vectors achieving these bounds are
\begin{align*}
    f_{P_{G_1}} &= (1, 8, 28, 56, 70, 56, 28, 8, 1)\\
    f_{P_{G_2}} &= (1, 119, 458, 728, 616, 302, 87, 14, 1)
    , \\
    f_{P_{G_3}} &= (1, 119, 460, 733, 620, 303, 87, 14, 1)
    .
\end{align*}

\vspace{1em}

\bibliographystyle{alpha}
\bibliography{references}

\newpage

\vspace*{\fill}

\subsection*{Affiliations} 
$ $
\vspace*{0.2cm}

\noindent \textsc{Marie-Charlotte Brandenburg} \\
\textsc{KTH Royal Institute of Technology \\
Lindstedtsvägen 25, 114 28 Stockholm, Sweden} \\
 \url{mcbra@kth.se} \\

\noindent \textsc{Benjamin Hollering} \\
\textsc{ Technische Universit\"at M\"unchen \\
Boltzmannstr. 3, 85748 Garching b. München, Germany} \\
\url{benhollering@gmail.com} \\

 \noindent \textsc{Irem Portakal} \\
\textsc{ Max Planck Institute for Mathematics in the Sciences \\
Inselstr. 22, 04103 Leipzig, Germany} \\
\url{mail@irem-portakal.de}
\end{document}